\newtheorem*{rep@theorem}{\rep@title}
\newcommand{\newreptheorem}[2]{%
\newenvironment{rep#1}[1]{%
 \def\rep@title{#2 \ref{##1}}%
 \begin{rep@theorem}}%
 {\end{rep@theorem}}}
\newcommand{\proofstep}[2]{%
  \par
  \addvspace{\medskipamount}%
  \noindent\emph{Step #1 #2}\par\nobreak
  \addvspace{\smallskipamount}%
  \@afterheading
}
\def\Amu{A_\mu}
\def\intzo{\int_{[0, 1]}}
\def\intrn{\int_{\mathbb R^N}}
\def\Deltas{(- \Delta)^s}
\def\X{X_0(\Omega)}
\def\gag{[u]_s}
\def\cns{c_{N, s}}
\def\P{\mathbb P}
\def\H{\mathbb H}
\def\critexp{2_{s_\sharp}^*}
\def\kre{k_{r_\varepsilon}}
\DeclareMathOperator{\spanned}{span}
\numberwithin{equation}{section}
\theoremstyle{plain}
\newtheorem{theorem}{Theorem}[section]
\theoremstyle{plain}
\newtheorem{prop}[theorem]{Proposition}
\theoremstyle{plain}
\newtheorem{lemma}[theorem]{Lemma}
\theoremstyle{plain}
\theoremstyle{definition}
\newtheorem{examples}[theorem]{Examples}
\newtheorem{remarks}[theorem]{Remarks}
\newtheorem{corollary}[theorem]{Corollary}
\theoremstyle{definition}
\newtheorem{definition}[theorem]{Definition}
\theoremstyle{definition}
\newtheorem{remark}[theorem]{Remark}
\theoremstyle{definition}
\theoremstyle{plain}
\theoremstyle{definition}
\begin{document}

\renewcommand{\labelenumi}{(\roman{enumi})}

\title[Asymptotically linear problems driven by superposition operators]{Multiple solutions to asymptotically linear problems driven by superposition operators}

\author[Danilo Gregorin Afonso]{Danilo Gregorin Afonso}
\address[Danilo Gregorin Afonso]{Dipartimento di Scienze Pure e Applicate \\
 Università degli Studi di Urbino Carlo Bo \\ Piazza della Repubblica 13, 61029 Urbino, Italy}
\email{danilo.gregorinafonso@uniurb.it}

\author[Rossella Bartolo]{Rossella Bartolo}
\address[Rossella Bartolo]{Dipartimento di Meccanica, Matematica e Management \\ 
Politecnico di Bari \\
Via E. Orabona 4, 70125 Bari, Italy}
\email{rossella.bartolo@poliba.it}

\author[Giovanni Molica Bisci]{Giovanni Molica Bisci}
\address[Giovanni Molica Bisci]{Department of Human Sciences and Quality of Life Promotion \\
Università di Roma San Raffaelle \\ 
Via di Val Cannuta, 00166 Roma, Italy}
\email{giovanni.molicabisci@uniroma5.it}

\subjclass[2020]{35R11, 35A15, 49J35, 35S15, 58E05, 47G20}

\keywords{Variational methods; mixed order operators; asymptotically linear problem; variational Dirichlet eigenvalues; abstract critical point theorem.}

\begin{abstract}
In this paper, we investigate the existence and multiplicity of weak solutions to problems involving a superposition operator of the type
$$\intzo \Deltas u \ d \mu(s),$$
for a signed measure $\mu$ on the interval of fractional exponents $[0,1]$, when the nonlinearity is subcritical and asymptotically linear at infinity; thus, we deal with a perturbation of the eigenvalue problem for the superposition operator. We use variational tools, extending to this setting well-known results for the classical and the fractional Laplace operators.

\end{abstract}

\maketitle

\section{Introduction}
\label{sec:intro}
In recent years, much attention has been drawn to elliptic problems driven by mixed-order operators, generally consisting of combinations of the Laplacian with its fractional (integral) powers. See, e.g.,  \cite{BiagiDipierroValdinociVecchi2023, BiagiVecchiDipierroValdinoci2020, DipierroPereraSportelliValdinoci2024, MalanchiniMolicaBisciSecchi2025} and the references therein.

Our focus in this work is on asymptotically linear problems driven by the superposition of fractional Laplacians. More precisely, we deal with the operator
\begin{equation*}
    \Amu u \coloneqq \intzo \Deltas u \ d \mu(s),
\end{equation*}
where $\mu = \mu^+ - \mu^-$, with $\mu^+$ and $\mu^-$ nonnegative finite Borel measures in $[0, 1]$, and $\Deltas$ denotes the integral fractional Laplacian, which can be defined, for all $s \in (0, 1)$, by
\begin{equation*}
    \Deltas u(x) = c_{N, s} \intrn \frac{2u(x) - u(x + y) - u(x - y)}{|y|^{N + 2s}} dy, \quad u \in \mathcal S,
\end{equation*}
where $\mathcal S$ is the Schwartz class of smooth, rapidly decaying functions, see \cite{MolicaBisciRadulescuServadei2016}.

Superpositions of fractional Laplacians of different orders were first considered in \cite{CabreSerra2016} and appear in many applications, such as biological models where the individuals of a population are subject to Gaussian and L\'evy flights (see \cite{DipierroValdinoci2021}, \cite{DipierroLippiValdinoci2022}). Since then, operators of the kind $\Amu$, where some terms of smaller order are allowed to have a negative contribution, have drawn much interest, see \cite{DipierroPereraSportelliValdinoci2024existence, DipierroPereraSportelliValdinoci2024}, also under Neumann boundary conditions \cite{DipierroLippiSportelliValdinoci2025Neumann}. 

Let us comment on some particular cases of interest. In what follows, $\delta_s$ denotes the Dirac measure with center at $s \in [0, 1]$.

\begin{examples} {\label{ex:1}}\hfill
\begin{enumerate}
    \item If $\mu = \delta_1$, then $A_\mu=-\Delta$ and we recover well-known results for the Laplacian (see \cite{BartoloBenciFortunato1983});

    \item if $\mu = \delta_s$, with $s \in (0, 1)$, then $A_\mu=(-\Delta)^s$ and we recover well-known results for the integral fractional Laplacian (see \cite{BartoloMolicaBisci2015});

    \item if $\mu = \delta_{s_1} + \delta_{s_2}$, then $\Amu$ is a sum of fractional Laplacians of different orders. Similarly, we could consider $\mu = \delta_1 + \delta_s$, so $A_\mu= -\Delta + (-\Delta)^s$;

    \item if $\mu = \delta_1 - \alpha \delta_s$, with $\alpha > 0$ small, then we have a combination of operators with the ``wrong sign", which is usually an obstacle when dealing with variational problems with operators of mixed orders.
\end{enumerate}
\end{examples}

In order to recover classical variational methods, we make the following structural assumption regarding the signed measure $\mu$: there exist $\bar s \in (0, 1]$ and $\gamma \geq 0$ such that
\begin{align}
    & \mu^+([\bar s, 1]) > 0, \label{eq:hyp_mu_a} \\
    & \mu^-|_{[\bar s, 1]} = 0, \label{eq:hyp_mu_b} \\
    & \mu^-([0, \bar s]) \leq \gamma \mu^+([\bar s, 1]). \label{eq:hyp_mu_c}    
\end{align}

Some further examples are as follows.

\begin{examples}{\label{ex:2}}\hfill
    \begin{enumerate}
        \item Let us consider a convergent series $\sum_{k = 1}^\infty c_k < + \infty$, where either $c_k \geq 0$ for all $k \in \mathbb N$ or, more generally, the condition \eqref{eq:hyp_mu_c} holds, and set, for $1\geq s_0>s_1>s_2>\ldots\geq 0$,    \begin{equation*}
        \Amu u= \sum_{k = 1}^\infty c_k (- \Delta)^{s_k}u;
    \end{equation*}

    \item if $\mu(s) = f(s) \ ds$, for some measurable function $f$, then $\Amu$ is a continuous superposition of fractional operators, as has been considered, e.g, in \cite{CabreSerra2016}:
    \begin{equation*}
        \Amu u = \int_0^1 \Deltas u f(s) \ ds.
    \end{equation*}
    \end{enumerate}
\end{examples}

We are interested in investigating the existence of solutions to asymptotically linear problems driven by the operator $\Amu$. More precisely, we consider the following Dirichlet problem:

\begin{equation}
    \label{eq:main_problem}
    \left\{
    \begin{array}{rcll}
        \Amu u - \bar \lambda u & = & f(x, u) & \quad \text{ in } \Omega \\
        u & = & 0 & \quad \text{ in } \mathbb R^N \setminus \Omega
    \end{array}
    \right.
    ,
\end{equation}
where $\Omega \subset \mathbb R^N$, $N \geq 2$, is an open bounded set with Lipschitz boundary, and $f: \Omega \times \mathbb R \to \mathbb R$ is a Carathéodory function satisfying
\begin{align}
    & \left(x \mapsto \sup_{|t| \leq \tau} |f(x, t)|\right) \in L^\infty(\Omega) \quad \forall \tau > 0, \tag{$f_1$} \label{eq:hyp_f_1} \\
    & \lim_{|t| \to + \infty} \frac{f(x, t)}{t} = 0 \quad \text{ uniformly almost everywhere in } \Omega, \tag{$f_2$} \label{eq:hyp_f_2} \\
    & \lim_{t \to 0} \frac{f(x, t)}{t} = \lambda_0 \in \mathbb R\setminus\{0\} \quad \text{ uniformly almost everywhere in } \Omega. \tag{$f_3$} \label{eq:hyp_f_3}
\end{align}

Problem \eqref{eq:main_problem} is a perturbation of the eigenvalue problem
\begin{equation}
    \label{eq:eigenvalue_problem}
    \left\{
    \begin{array}{rcll}
        \Amu u & = & \lambda u & \quad \text{ in } \Omega \\
        u & = & 0 & \quad \text{ in } \mathbb R^N \setminus \Omega
    \end{array}
    \right.
    ,
\end{equation}
which has been recently been analyzed in detail in \cite{DipierroLippiSportelliValdinoci2025spectral}. It is known that there exists a sequence of eigenvalues (counted with multiplicity)
\begin{equation*}
    0 < \lambda_1 < \lambda_2 \leq \lambda_3 \leq \ldots \leq \lambda_n \leq \ldots \to + \infty;
\end{equation*}
we refer to Section \ref{sec:prelim} for more details on the spectral theory for \eqref{eq:eigenvalue_problem}. Hereafter, we denote by $\sigma(A_\mu)$ the spectrum of $A_\mu$.

Next, we state our main results about the existence and multiplicity of solutions to \eqref{eq:main_problem}. Our approach is variational, and weak solutions are found as critical points of a functional in an appropriate Banach space denoted by $X_0(\Omega)$ (see Section \ref{sec:prelim}).
 
 \begin{prop}{\label{prop:existence_via_saddle_point}}  
Suppose that $\mu = \mu^+ - \mu^-$ satisfies \eqref{eq:hyp_mu_a}-\eqref{eq:hyp_mu_c} and $f$ satisfies \eqref{eq:hyp_f_1}-\eqref{eq:hyp_f_2}. Then, there exists $\gamma_0 > 0$ such that if $\gamma \in [0, \gamma_0]$ and if $\bar \lambda \notin \sigma(\Amu)$, there exists a solution $u \in \X$ to \eqref{eq:main_problem}.
\end{prop}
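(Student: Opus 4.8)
The plan is to set up the variational framework and then apply a saddle-point (linking) type critical point theorem. First I would introduce the energy functional associated to \eqref{eq:main_problem} on $\X$, namely
\[
J(u) = \frac12 \left( \intzo [u]_s^2 \, d\mu(s) - \bar\lambda \|u\|_{L^2}^2 \right) - \intrn F(x, u) \, dx,
\]
where $F(x, t) = \int_0^t f(x, \tau) \, d\tau$. Using \eqref{eq:hyp_mu_a}--\eqref{eq:hyp_mu_c} with $\gamma$ small, the quadratic form $u \mapsto \intzo [u]_s^2 \, d\mu(s)$ should be coercive and equivalent to (a power of) the Gagliardo-type seminorm that defines $X_0(\Omega)$, so that $J$ is well-defined and $C^1$ on $\X$, with critical points corresponding to weak solutions. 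Hypotheses \eqref{eq:hyp_f_1}--\eqref{eq:hyp_f_2} give that $F(x,t) = o(t^2)$ as $|t| \to \infty$ and that $|f(x,t)| \le C$ for every $t$ after absorbing into a linear bound; this is the structural input that makes the nonlinear term a genuinely lower-order (sublinear at infinity) perturbation of the quadratic part.

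Next I would verify the Palais--Smale condition. Since $\bar\lambda \notin \sigma(\Amu)$, the quadratic part $Q(u) = \intzo [u]_s^2 \, d\mu(s) - \bar\lambda \|u\|_{L^2}^2$ is nondegenerate: writing $X_0(\Omega) = H^- \oplus H^+$ as the $A_\mu$-invariant splitting into the finite-dimensional sum of eigenspaces with $\lambda_k < \bar\lambda$ and its complement, one has $Q(u) \le -c\|u\|^2$ on $H^-$ and $Q(u) \ge c\|u\|^2$ on $H^+$ for some $c > 0$. Given a Palais--Smale sequence $(u_n)$, the sublinearity \eqref{eq:hyp_f_2} forces $\|f(\cdot, u_n)\|_{L^2} = o(\|u_n\|)$, and testing $J'(u_n)$ against $u_n^+ - u_n^-$ together with the nondegeneracy of $Q$ yields boundedness of $(u_n)$; compactness of the embedding $\X \hookrightarrow L^2(\Omega)$ (which should be recorded in Section~\ref{sec:prelim}) then upgrades a weakly convergent subsequence to a strongly convergent one in the standard way.

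Then I would check the saddle-point geometry on the splitting $X_0(\Omega) = H^- \oplus H^+$. On $H^+$, combining the coercive lower bound $Q(u) \ge c\|u\|^2$ with $F(x,u) = o(\|u\|^2)$ gives $J(u) \ge \frac{c}{2}\|u\|^2 - o(\|u\|^2) - C \to +\infty$, so $J$ is bounded below on $H^+$. On the finite-dimensional $H^-$, the bound $Q(u) \le -c\|u\|^2$ and the linear growth of $F$ (from \eqref{eq:hyp_f_1}) give $J(u) \le -\frac{c}{2}\|u\|^2 + C\|u\| \to -\infty$ as $\|u\| \to \infty$ within $H^-$; hence there is $R > 0$ with $\sup_{u \in H^-,\, \|u\| = R} J(u) < \inf_{H^+} J$. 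Applying the abstract saddle-point theorem of Rabinowitz (the ``abstract critical point theorem'' alluded to in the keywords) produces a critical point of $J$, hence a solution to \eqref{eq:main_problem}.

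The main obstacle I anticipate is the first step rather than the abstract deformation argument: one must pin down the precise choice of $\gamma_0$ so that \eqref{eq:hyp_mu_c} guarantees the negative part $\mu^-$ is dominated by $\mu^+$ strongly enough that $Q$ (and indeed the full $A_\mu$ seminorm) controls the $X_0(\Omega)$-norm up to equivalence --- this is where the mixed-sign nature of $\mu$ (Example~\ref{ex:1}(iv)) genuinely bites, since without it the quadratic form could fail to be bounded below even on $H^+$. Establishing this norm-equivalence, together with the resulting fact that the eigenvalue sequence of \eqref{eq:eigenvalue_problem} is well-defined and that the spectral splitting at $\bar\lambda$ behaves as in the classical case, is the technical heart; once it is in place, the Palais--Smale verification and the saddle geometry follow the BartoloBenciFortunato1983 template closely. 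A secondary subtlety is handling the non-Hilbert (merely Banach) structure of $X_0(\Omega)$ when $\mu$ is a general measure: one should either work with an equivalent Hilbertian norm coming from the quadratic form on the relevant subspace, or check that the abstract theorem being invoked is stated for the Banach setting.
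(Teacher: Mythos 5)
Your proposal is correct and follows essentially the same route as the paper: the same functional $J$ on $\X$ with the $\gamma$-small norm equivalence (the paper's Remark \ref{rem:equivalent_norms} and Proposition \ref{prop:DpPSV_prop_2.3}), the spectral splitting of $\X$ at $\bar\lambda$ via the cited eigenvalue theory, verification of (PS), and the Rabinowitz Saddle Point Theorem. The only real deviation is in proving boundedness of Palais--Smale sequences, where you test $J'(u_n)$ against $u_n^+-u_n^-$ and use nondegeneracy of the quadratic form, while the paper normalizes $w_n=u_n/\|u_n\|$ and derives a contradiction by producing an eigenfunction for $\bar\lambda$; both are standard and valid (and note that $\X$ is in fact a Hilbert space by Lemma \ref{lem:DpPSV_lemma_2.2}, so your worry about a merely Banach structure does not arise).
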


\begin{theorem}{\label{thm:main}}
Suppose that $\mu = \mu^+ - \mu^-$ satisfies \eqref{eq:hyp_mu_a}-\eqref{eq:hyp_mu_c}, $f$ satisfies \eqref{eq:hyp_f_1}-\eqref{eq:hyp_f_3}, and that $f(x, \cdot)$ is odd almost everywhere in $\Omega$. Furthermore, assume that there exist $h, k \in \mathbb N$, with $h \leq k$, such that
    \begin{equation}
        \label{eq:hyp_Lambda}
        \tag{$\Lambda$}
        \lambda_0 + \bar \lambda < \lambda_h \leq \lambda_k < \bar \lambda.
    \end{equation}
    Then, there exists $\gamma_0 > 0$, depending on $N, \Omega$, $s_\sharp$, and $\bar \lambda$, such that, if $\gamma \in [0, \gamma_0]$ and $\bar \lambda \notin \sigma(\Amu)$, \eqref{eq:main_problem} admits at least $k - h + 1$ pairs of nontrivial weak solutions in $\X$.
\end{theorem}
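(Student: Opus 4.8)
The plan is to realize the weak solutions of \eqref{eq:main_problem} as critical points of the energy functional
\[
\mathcal J(u) = \frac12 \|u\|_0^2 - \frac{\bar\lambda}{2}\int_\Omega u^2\,dx - \int_\Omega F(x,u)\,dx, \qquad F(x,t)=\int_0^t f(x,\tau)\,d\tau,
\]
on the Hilbert space $\X$ associated with $A_\mu$ (whose norm $\|\cdot\|_0$ is the one making \eqref{eq:hyp_mu_a}--\eqref{eq:hyp_mu_c} yield coercivity for $\gamma$ small, as recalled in Section \ref{sec:prelim}). The assumptions \eqref{eq:hyp_f_1}--\eqref{eq:hyp_f_2} guarantee that $\mathcal J\in C^1(\X,\mathbb R)$, is even (since $f(x,\cdot)$ is odd), and — this is the key structural fact — behaves at infinity like the \emph{asymptotic functional} $\mathcal J_\infty(u)=\tfrac12\|u\|_0^2 - \tfrac{\bar\lambda}{2}\int_\Omega u^2$, because $F(x,t)/t^2 \to 0$ as $|t|\to\infty$ by \eqref{eq:hyp_f_2}. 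The strategy is the classical one for asymptotically linear problems (Amann--Zehnder / Bartolo--Benci--Fortunato): split $\X$ according to the spectral subspaces of $A_\mu$ relative to the value $\bar\lambda$, and apply an abstract multiplicity theorem for even functionals (a $\mathbb Z_2$-index / linking theorem of Benci--Rabinowitz type, to be recalled in the preliminaries) that counts pairs of critical points by the difference of "dimensions" of the linking sets.

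Concretely, I would proceed in the following steps. First, verify the \emph{Palais--Smale condition} for $\mathcal J$: given a PS sequence $(u_n)$, \eqref{eq:hyp_f_2} forces $\|u_n\|_0$ to stay bounded — if not, normalizing $w_n=u_n/\|u_n\|_0$ and passing to the limit, $w_n\rightharpoonup w$ solves $A_\mu w = \bar\lambda w$ (the nonlinear term disappears in the limit by \eqref{eq:hyp_f_1}--\eqref{eq:hyp_f_2}), so either $w=0$, contradicting $\|w_n\|_0=1$ via the compact embedding $\X\hookrightarrow L^2(\Omega)$, or $\bar\lambda\in\sigma(A_\mu)$, excluded by hypothesis — and then boundedness plus the compact embedding yields a convergent subsequence. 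Second, decompose $\X = \X^- \oplus \X^0 \oplus \X^+$ where $\X^-$ is spanned by eigenfunctions with $\lambda_j < \bar\lambda$, $\X^0$ the (here trivial, since $\bar\lambda\notin\sigma(A_\mu)$) kernel, and $\X^+$ the rest; by \eqref{eq:hyp_Lambda} we have $\lambda_k<\bar\lambda$, so $\dim \X^- \geq k$. Third, localize near the origin: since $f(x,t)/t\to\lambda_0$ as $t\to 0$ by \eqref{eq:hyp_f_3}, the quadratic approximation of $\mathcal J$ at $0$ is governed by $A_\mu - (\bar\lambda+\lambda_0)$, and since $\bar\lambda+\lambda_0<\lambda_h$ by \eqref{eq:hyp_Lambda}, the "negative space at the origin" has dimension at most $h-1$; thus there is a nontrivial gap — on a suitable $(k-h+1)$-dimensional set $\mathcal J$ is negative near $0$ while it is bounded below and eventually positive on the complementary infinite-dimensional piece associated with $\X^+$.

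Fourth, package these geometric facts into the hypotheses of the abstract critical point theorem: one checks a linking-type inequality $\sup_{\partial Q}\mathcal J \le \inf_S \mathcal J$ between a sphere $S$ in (a subspace of) $\X^+ \oplus$ (small-index part) and the boundary of a ball $Q$ in $\X^-$, using the asymptotic linearity to get the a priori bound $\sup_{\X^-}\mathcal J < +\infty$ (this is where \eqref{eq:hyp_f_2} is essential — without it the functional would be unbounded on $\X^-$) and using \eqref{eq:hyp_f_3} for the local negativity. The index theory then produces at least $k-h+1$ distinct critical levels, hence, by evenness, $k-h+1$ pairs $\pm u_i$ of critical points; nontriviality follows because all these critical levels are separated away from $\mathcal J(0)=0$, either strictly below or strictly above. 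Finally, the threshold $\gamma_0$ enters only through Section \ref{sec:prelim}: $\gamma$ must be small enough that the bilinear form of $A_\mu$ is coercive and equivalent to a Gagliardo-type norm of order $s_\sharp$, so that $\X$ is a genuine Hilbert space with compact embedding into $L^2(\Omega)$ and a discrete spectrum; all the functional-analytic machinery above then applies verbatim.

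The main obstacle I anticipate is making the $X_0(\Omega)$-setup fully rigorous under the signed-measure hypotheses \eqref{eq:hyp_mu_a}--\eqref{eq:hyp_mu_c}: one needs the negative part $\mu^-$ to be dominated by the positive part strongly enough (quantitatively, through $\gamma_0$) that the quadratic form stays positive definite and the spectral decomposition, compact embedding, and eigenfunction basis are all available — this is precisely the content that must be imported from the preliminaries, and the delicate point is tracking how $\gamma_0$ depends on $N,\Omega,s_\sharp,\bar\lambda$ so that it is chosen \emph{after} fixing $\bar\lambda$ but uniformly over the relevant spectral gap in \eqref{eq:hyp_Lambda}. Once that is in place, the verification of (PS) and the linking geometry for asymptotically linear nonlinearities is standard, following \cite{BartoloBenciFortunato1983} and \cite{BartoloMolicaBisci2015} with only notational changes.
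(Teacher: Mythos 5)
Your overall strategy coincides with the paper's: the same energy functional on $\X$, the same contradiction argument for the Palais--Smale condition (normalize $w_n=u_n/\|u_n\|$, pass to the limit via the compact embedding, and contradict $\bar\lambda\notin\sigma(\Amu)$, exactly as in Proposition \ref{prop:Palais_Smale}), and the same application of the Bartolo--Benci--Fortunato pseudo-index theorem (Theorem \ref{thm:pseudo_index}) with the spectral subspaces $V=\H_k$ and $W=\P_h$, which yields $\dim V-\mathrm{codim}\,W=k-(h-1)=k-h+1$ pairs of critical points.

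One point in your description of the geometry is backwards and should be fixed before you write it out. Under \eqref{eq:hyp_Lambda} one has $\lambda_0+\bar\lambda<\lambda_h\leq\lambda_k<\bar\lambda$ (so $\lambda_0<0$), and the correct picture is: near the origin $J$ is bounded \emph{below} by a constant $c_0>0$ on a small sphere in $\P_h$ (codimension $h-1$), since $F(x,t)\leq\frac{\lambda_0+\varepsilon}{2}t^2+k_\varepsilon|t|^{q+2}$ together with $\|u\|_2^2\leq\|u\|^2/\lambda_h$ on $\P_h$ gives
\begin{equation*}
J(u)\geq\frac12\left(1-\frac{\bar\lambda+\lambda_0+\varepsilon}{\lambda_h}\right)\|u\|^2-k_\varepsilon'\|u\|^{q+2},
\end{equation*}
while at infinity $J$ is bounded \emph{above} (indeed $J\to-\infty$) on the $k$-dimensional space $\H_k$, because $\lambda_k<\bar\lambda$ and \eqref{eq:hyp_f_2} give $J(u)\leq\frac12(\lambda_k+\varepsilon-\bar\lambda)\|u\|_2^2+C_\varepsilon\|u\|_2$. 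There is no ``$(k-h+1)$-dimensional set where $J$ is negative near $0$''; the number $k-h+1$ appears only as $\dim\H_k-\mathrm{codim}\,\P_h$ in the abstract theorem. Your ``local negativity'' phrasing would be the right geometry for the reversed condition $\bar\lambda<\lambda_h\leq\lambda_k<\lambda_0+\bar\lambda$ discussed in the remarks after the statement, not for \eqref{eq:hyp_Lambda}. Correspondingly, nontriviality is not because the critical levels sit ``either strictly below or strictly above'' $0$: all $k-h+1$ critical values lie in $[c_0,c_\infty]$ with $c_0>0$, and that alone excludes $u=0$. With these sign corrections your argument is exactly the paper's.
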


We will show that Proposition \ref{prop:existence_via_saddle_point} is a direct consequence of the Saddle Point Theorem (see \cite[Theorem 4.6]{Rabinowitz1986}), 
    while the proof of Theorem \ref{thm:main} is based on the application of an abstract critical point theorem in \cite[Theorem 2.9]{BartoloBenciFortunato1983} that we recall in Section \ref{sec:prelim} for the reader's convenience. A recent application of this abstract result in the nonlocal magnetic setting has been given in \cite{BartolodAveniaMolicaBisci2024}.
    
	\begin{remarks}
		\rm{We point out that
			\begin{enumerate}
				\item the  statement in  Theorem \ref{thm:main} holds with slight changes in the proof also if, instead of condition \eqref{eq:hyp_Lambda}, we require $\bar\lambda< \lambda_h \leq \lambda_k < \lambda_0 + \bar\lambda$ (see \cite[Theorem 3.1]{BartoloCandelaSalvatore2012});
				\item if  $\lambda_0$ in \eqref{eq:hyp_f_3} belongs to $\{\pm\infty\}$, then we can reason as in \cite[Remark 3.3]{BartoloCandelaSalvatore2012};
				\item we refer to \cite[Remarks 1.5 and 3.2]{BartoloCandelaSalvatore2012} for some comments about the case $\lambda_0=0$;
				\item in case of resonance, i.e., if $\bar\lambda\in\sigma(A_\mu)$, we can proceed as in \cite[Theorem 1.2]{BartoloCandelaSalvatore2012}, up to adding further assumptions.
		\end{enumerate}}	
	\end{remarks}

Examples \ref{ex:1} and \ref{ex:2} make it clear that our results give rise to several corollaries, many of which are new in more than one case (see, e.g., Corollary \ref{cor:serie}).

In a forthcoming paper, we will consider the case of nonlinear fractional operators of mixed order of $p$-Laplacian type, in the spirit of \cite{BartoloMolicaBisci2017}.

This paper is organized as follows: in Section \ref{sec:prelim} we depict the main aspects of our non-local setting, recall some properties about the spectrum of the superposition operator $A_\mu$  and present some abstract tools; then,  in Section \ref{sec:existence} we prove Proposition \ref{prop:existence_via_saddle_point} and Theorem \ref{thm:main}.	

\section{Preliminaries}
\label{sec:prelim}

\subsection{Definitions and notations}

Let $s \in (0, 1)$. The $s$-Gagliardo seminorm of a measurable function $u: \mathbb R^N \to \mathbb R$ is given by
\begin{equation*}
    [u]_s \coloneqq \left(c_{N, s} \intrn \intrn \frac{|u(x) - u(y)|^2}{|x - y|^{N + 2s}} \ dx \ dy \right)^{\frac{1}{2}},
\end{equation*}
where the normalization constant $c_{N, s}$ is chosen so that 
\begin{equation*}
    \lim_{s \to 1^-} [u]_s = [u]_1 \coloneqq \|\nabla u\|_{L^2(\mathbb R^N)} \quad \text{ and } \quad \lim_{s \to 0^+}  [u]_s=[u]_0 \coloneqq \|u\|_{L^2(\mathbb R^N)}.
\end{equation*}
For more on the integral fractional Laplacian and fractional Sobolev-type spaces, see \cite{DiNezzaPalatucciValdinoci2012, MolicaBisciRadulescuServadei2016}.

We denote by $C_j$, with $j \in \mathbb N$, some positive constants that appear in the calculations, and whose specific values are not of interest.

Finally, the $L^p(\Omega)$ norm is denoted by $\|\cdot\|_p$.
\subsection{Functional setting}
We deal with the operator $A_\mu$ under assumptions \eqref{eq:hyp_mu_a}-\eqref{eq:hyp_mu_c}, so basically the component of the signed measure $\mu$ supported on higher fractional exponents is positive and, if there are negative components of $\mu$, they must be controlled (see \eqref{eq:hyp_mu_c}) by the positive ones. Let us also point out that $\gamma$ in \eqref{eq:hyp_mu_c} is taken sufficiently small and that by \eqref{eq:hyp_mu_a} there exists $s_\sharp\in [\bar s,1]$ such that $\mu^+([s_\sharp,1])>0$.

The next lemma shows that higher exponents in fractional norms control lower exponents by uniform constants.
\begin{lemma}[{\cite[Lemma 2.1]{DipierroPereraSportelliValdinoci2024}}]
    \label{lem:DpPSV_lemma_2.1}
    Let $0 \leq s_1 \leq s_2 \leq 1$. Then, there exists a constant $c = c(N, \Omega)>0$ such that, for any measurable function $u: \mathbb R^N \to \mathbb R$ such that $u = 0$ almost everywhere in $\mathbb R^N \setminus \Omega$, it holds 
    \begin{equation*}
        [u]_{s_1} \leq c [u]_{s_2}.   
    \end{equation*}
\end{lemma}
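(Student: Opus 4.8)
The plan is to prove Lemma~\ref{lem:DpPSV_lemma_2.1} by reducing the inequality $[u]_{s_1} \leq c[u]_{s_2}$ to the two genuinely nontrivial endpoint cases and then interpolating. The key observation is that, for functions supported in the bounded set $\Omega$, the Gagliardo seminorms are monotone in the exponent only up to a constant depending on $\operatorname{diam}\Omega$, and this constant can be made uniform in $s_1, s_2$.

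\emph{Step 1 (Reduction and the role of the diameter).} First I would write out the square of the seminorm and split the double integral over $\mathbb R^N \times \mathbb R^N$ according to whether $|x-y|$ is small or large. Since $u$ vanishes outside $\Omega$, in the region $|x - y| \geq 1$ (say) at least one of $x, y$ lies within distance $1$ of $\Omega$, so the kernel $|x-y|^{-(N+2s_1)}$ is bounded and the corresponding piece of $[u]_{s_1}^2$ is controlled by $C\|u\|_{L^2}^2$, uniformly for $s_1 \in [0,1]$, by a standard computation integrating the tail $\int_{|z|\geq 1}|z|^{-(N+2s_1)}\,dz$ which is uniformly bounded. In the region $|x-y|<1$ we have $|x-y|^{-(N+2s_1)} \leq |x-y|^{-(N+2s_2)}$ since $s_1 \leq s_2$, so that piece is dominated by $[u]_{s_2}^2$. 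Combining, $[u]_{s_1}^2 \leq C\big(\|u\|_{L^2}^2 + [u]_{s_2}^2\big)$.

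\emph{Step 2 (Absorbing the $L^2$ term).} The remaining point is to bound $\|u\|_{L^2(\Omega)} = [u]_0$ by $[u]_{s_2}$. For $s_2$ bounded away from $0$ this is the fractional Poincaré inequality on the bounded domain $\Omega$. To get a constant uniform down to $s_2 \to 0^+$ — which is needed since the statement allows $s_2$ arbitrarily small — I would instead argue directly: for any $u$ supported in $\Omega$ and any $y$ with $|y|$ comparable to $\operatorname{diam}\Omega + 1$, one has $u(x+y)=0$ for $x \in \Omega$, hence $|u(x)-u(x+y)|^2 = |u(x)|^2$ there; integrating this over such a shell of $y$'s and over $x \in \Omega$ against the kernel gives $\|u\|_{L^2}^2 \leq C(N,\Omega)\,[u]_{s_2}^2$ with $C$ uniform in $s_2 \in [0,1]$ (the shell has fixed size and the kernel is pinched between two positive constants there). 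This is really the only inequality with any content.

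\emph{Step 3 (Conclusion).} Plugging Step 2 into Step 1 yields $[u]_{s_1}^2 \leq C(N,\Omega)\,[u]_{s_2}^2$, and taking square roots gives the claim with $c = c(N,\Omega)$ independent of $s_1, s_2$. The main obstacle, and the point deserving the most care, is the uniformity of the constant as $s_2 \to 0^+$ and as $s_1 \to 1^-$: one must check that the normalization constants $c_{N,s}$ (which degenerate like $s(1-s)$ in the usual normalization) do not spoil uniformity, so I would either absorb them explicitly or — following \cite{DipierroPereraSportelliValdinoci2024} — note that the relevant comparisons only ever use ratios of the seminorms at comparable scales where these constants cancel. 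Since the result is quoted verbatim from \cite[Lemma 2.1]{DipierroPereraSportelliValdinoci2024}, in the paper itself I would simply cite it; the sketch above indicates how one proves it from scratch.
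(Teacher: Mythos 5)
The paper offers no proof of this lemma: it is imported verbatim from \cite[Lemma 2.1]{DipierroPereraSportelliValdinoci2024}, so the only comparison available is with the proof in that reference. Your overall architecture (split into a part controlled by $[u]_{s_2}$ plus an $L^2$ remainder, then absorb the remainder by a Poincar\'e-type bound) is the right shape, but as written it has a genuine gap, and it is exactly the one you flag and then defer in Step 3: the normalization constants $c_{N,s}$, which degenerate like $s(1-s)$, do spoil the uniformity of your physical-space estimates. Concretely: (a) in Step 1 the pointwise comparison $|x-y|^{-(N+2s_1)}\leq|x-y|^{-(N+2s_2)}$ on $\{|x-y|<1\}$ only bounds the \emph{unnormalized} integrals, so after reinstating the constants you pick up the factor $c_{N,s_1}/c_{N,s_2}$, which blows up as $s_2\to 1^-$ with $s_1$ fixed (and the unnormalized $s_2$-integral itself diverges in that limit for smooth $u$, which is precisely why $c_{N,s}\sim 1-s$ is needed for $[u]_s\to\|\nabla u\|_2$); (b) in Step 2 your shell argument yields $\|u\|_{L^2}^2\leq C\,c_{N,s_2}^{-1}[u]_{s_2}^2$, and $c_{N,s_2}^{-1}$ is unbounded at both endpoints $s_2\to 0^+$ and $s_2\to 1^-$; (c) at $s_2=1$ the seminorm is $\|\nabla u\|_{L^2}$ and is not given by a double integral at all, so the kernel comparison does not even apply there. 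So the constant your argument produces is not $c(N,\Omega)$ uniformly in $s_1,s_2$, which is the entire content of the lemma.

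The standard repair --- and, to my knowledge, the route taken in \cite{DipierroPereraSportelliValdinoci2024} --- is to move to the Fourier side, where the normalization is chosen precisely so that $[u]_s^2=\int_{\mathbb R^N}|\xi|^{2s}|\widehat u(\xi)|^2\,d\xi$ for all $s\in[0,1]$, endpoints included. Then $|\xi|^{2s_1}\leq \max\bigl(1,|\xi|^{2s_2}\bigr)$ gives $[u]_{s_1}^2\leq\|u\|_{L^2}^2+[u]_{s_2}^2$ with constant $1$, and the $L^2$ term is absorbed using the compact support: $\|\widehat u\|_\infty\leq\|u\|_{L^1}\leq|\Omega|^{1/2}\|u\|_{L^2}$, so $\int_{|\xi|<\rho}|\widehat u|^2\leq|B_\rho|\,|\Omega|\,\|u\|_{L^2}^2\leq\tfrac12\|u\|_{L^2}^2$ for $\rho=\rho(N,\Omega)$ small, whence $\|u\|_{L^2}^2\leq 2\max(1,\rho^{-2})\,[u]_{s_2}^2$ uniformly in $s_2\in[0,1]$. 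This is the analogue of your near/far splitting, but performed where the constants cancel identically. With that substitution your outline closes; without it, Steps 1 and 2 do not deliver a constant depending only on $N$ and $\Omega$.
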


An appropriate Sobolev-type space in which to consider the weak formulation of problems driven by the operator $\Amu$ is the set $\X$ of measurable functions $u: \mathbb R^N \to \mathbb R$ such that
\begin{equation*}
    u \equiv 0 \quad \text{ in } \mathbb R^N \setminus \Omega
\end{equation*}
and
\begin{equation*}
    \intzo [u]_s^2 d \mu^+(s) < + \infty.
\end{equation*}
Indeed, the following lemma holds.

\begin{lemma}[{\cite[Lemma 2.2]{DipierroPereraSportelliValdinoci2024}}]
    \label{lem:DpPSV_lemma_2.2}
    If $\mu^+$ satisfies  \eqref{eq:hyp_mu_a}, then $\X$ is a Hilbert space with the norm
    \begin{equation*}
        \|u\|_{\X} \coloneqq \left(\intzo [u]_s^2 \ d\mu^+(s) \right)^{1/2}.
    \end{equation*}
\end{lemma}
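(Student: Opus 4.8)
The plan is to verify first that $\|\cdot\|_{\X}$ is a genuine norm on the vector space $\X$, and then to establish completeness. Bilinearity of the associated form
\[
\langle u, v\rangle_{\X} \coloneqq \intzo \cns \intrn \intrn \frac{(u(x)-u(y))(v(x)-v(y))}{|x-y|^{N+2s}}\,dx\,dy\,d\mu^+(s)
\]
is immediate from linearity of the double integral in each argument and from the fact that $\mu^+$ is a nonnegative measure; the only subtle point for the norm is positive definiteness, i.e.\ that $\|u\|_{\X}=0$ forces $u\equiv 0$. Here I would invoke hypothesis \eqref{eq:hyp_mu_a}: since $\mu^+([\bar s,1])>0$, there exists $s_\sharp\in[\bar s,1]$ with $\mu^+([s_\sharp,1])>0$, so $\|u\|_{\X}=0$ implies $[u]_s=0$ for $\mu^+$-a.e.\ $s$, in particular for a set of exponents $s$ of positive measure inside $[s_\sharp,1]$; pick one such $s\in(0,1)$, and then $[u]_s=0$ together with $u=0$ on $\mathbb R^N\setminus\Omega$ (which has positive measure) forces $u$ constant and hence $u\equiv 0$. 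The triangle inequality follows from Cauchy--Schwarz for the inner product $\langle\cdot,\cdot\rangle_{\X}$, which is well defined precisely on $\X$.

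For completeness, let $(u_n)$ be a Cauchy sequence in $\X$. The key reduction is to a single fractional exponent: by the choice of $s_\sharp$ above, there is $t\in(0,1)$ with $\mu^+(\{s\ge t\})>0$ — more robustly, by monotone convergence there is $t_0\in(\bar s, 1)$ with $m\coloneqq\mu^+([t_0,1])>0$, and by Lemma \ref{lem:DpPSV_lemma_2.1} one has $[u]_{t_0}\le c\,[u]_s$ for every $s\ge t_0$, whence
\[
m\,[u]_{t_0}^2 \;\le\; c^2 \intzo [u]_s^2\,d\mu^+(s) \;=\; c^2\|u\|_{\X}^2 .
\]
Thus $\|\cdot\|_{\X}$ dominates (a constant multiple of) the single Gagliardo seminorm $[\cdot]_{t_0}$, and since functions in $\X$ vanish outside the bounded set $\Omega$, the fractional Poincaré inequality on $\X_0$-type spaces (see \cite{MolicaBisciRadulescuServadei2016, DiNezzaPalatucciValdinoci2012}) gives $\|u\|_{L^2(\Omega)}\le C\,[u]_{t_0}\le C'\|u\|_{\X}$. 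Consequently $(u_n)$ is Cauchy in $L^2(\mathbb R^N)$ and converges there to some $u$ with $u\equiv 0$ outside $\Omega$; passing to a subsequence, $u_n\to u$ a.e.

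It remains to show $u\in\X$ and $u_n\to u$ in $\X$. The natural device is a Fatou-type argument in the combined variable $(s,x,y)$ with respect to the product measure $d\mu^+(s)\otimes \frac{dx\,dy}{|x-y|^{N+2s}}$ (times $\cns$). Since $u_n\to u$ a.e.\ in $\mathbb R^N$, the integrand $\frac{|(u_n-u_m)(x)-(u_n-u_m)(y)|^2}{|x-y|^{N+2s}}$ for fixed $m$ converges a.e.\ as $n\to\infty$ to $\frac{|(u-u_m)(x)-(u-u_m)(y)|^2}{|x-y|^{N+2s}}$, so Fatou's lemma yields $\|u-u_m\|_{\X}\le \liminf_n \|u_n-u_m\|_{\X}$; given $\varepsilon>0$, choosing $m$ large (by the Cauchy property) makes the right-hand side $\le\varepsilon$, which shows simultaneously that $u-u_m\in\X$, hence $u\in\X$, and that $u_n\to u$ in $\X$. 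Finally $\langle\cdot,\cdot\rangle_{\X}$ induces $\|\cdot\|_{\X}$ by construction, so $\X$ is a Hilbert space.

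I expect the main obstacle to be the completeness step, and specifically making rigorous the Fatou argument over the product measure — one must be careful that the measure $d\mu^+(s)\otimes |x-y|^{-N-2s}\,dx\,dy$ genuinely is a measure (it is, by Tonelli, since the kernel is jointly measurable in $(s,x,y)$ and nonnegative) and that a.e.-in-$\mathbb R^N$ convergence of $u_n$ upgrades to a.e.-in-$(s,x,y)$ convergence of the integrands, which is routine once one notes the $s$-dependence enters only through the harmless weight $|x-y|^{-N-2s}$. Everything else — bilinearity, Cauchy--Schwarz, the reduction to a single exponent via Lemma \ref{lem:DpPSV_lemma_2.1}, and the fractional Poincaré inequality on $\Omega$ bounded — is standard.
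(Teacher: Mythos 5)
The paper does not actually prove this lemma: it is quoted verbatim from \cite[Lemma 2.2]{DipierroPereraSportelliValdinoci2024}, so there is no in-paper argument to compare against. Judged on its own, your proof follows the standard (and, as far as the cited source goes, the expected) route --- reduce to a single Gagliardo seminorm via Lemma \ref{lem:DpPSV_lemma_2.1}, deduce $L^2$-control, extract an a.e.\ limit, and close with Fatou over the product measure --- and the overall structure is sound.

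Two small inaccuracies should be repaired. First, in the completeness step you assert that ``by monotone convergence there is $t_0\in(\bar s,1)$ with $\mu^+([t_0,1])>0$''; this is false if, say, $\mu^+|_{[\bar s,1]}=\delta_{\bar s}$ with $\bar s<1$. No such $t_0$ is needed: take $t_0=\bar s$ itself, since \eqref{eq:hyp_mu_a} gives $m\coloneqq\mu^+([\bar s,1])>0$ and Lemma \ref{lem:DpPSV_lemma_2.1} gives $[u]_{\bar s}\le c\,[u]_s$ for every $s\in[\bar s,1]$, whence $m\,[u]_{\bar s}^2\le c^2\|u\|_{\X}^2$; a second application of the same lemma with $s_1=0$ yields $\|u\|_{L^2}=[u]_0\le c\,[u]_{\bar s}$, which is exactly the Poincar\'e-type bound you want (no external fractional Poincar\'e inequality is required). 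Second, for positive definiteness you ``pick one such $s\in(0,1)$'', but the $\mu^+$-mass on $[\bar s,1]$ may be concentrated at $s=1$ (e.g.\ $\mu^+=\delta_1$), or $\bar s$ may equal $1$; either allow $s=1$ (where $[u]_1=0$ forces $u$ constant, hence $u\equiv 0$ since $u$ vanishes outside $\Omega$), or, more uniformly, argue again via Lemma \ref{lem:DpPSV_lemma_2.1} that $\|u\|_{\X}=0$ implies $\|u\|_{L^2}=0$. With these adjustments the proof is complete; the Fatou step over $d\mu^+(s)\otimes |x-y|^{-N-2s}dx\,dy$ is handled correctly, including the passage to a subsequence converging almost everywhere.
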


Now we denote by $\langle \cdot, \cdot \rangle$ the inner product in $\X$ induced by the norm $\|\cdot\|_{\X}$:
\begin{equation*}
    \langle u, v \rangle \coloneqq  \intzo \left(\cns \intrn \intrn \frac{(u(x) - u(y))(v(x) - v(y))}{|x - y|^{N + 2s}} \ dx \ dy \right) \ d\mu^+(s) \quad \forall u, v \in \X.
\end{equation*}
It is also useful to define the following bilinear form:
\begin{equation*}
    (u, v) \coloneqq \int_{[0, \bar s]} \left(\cns \intrn \intrn \frac{(u(x) - u(y))(v(x) - v(y))}{|x - y|^{N + 2s}} \ dx \ dy \right) \ d\mu^-(s) \quad \forall u, v \in \X,
\end{equation*}
in fact
\begin{equation*}
    \intzo [u]_s^2 \ d\mu(s) = \langle u, u \rangle - (u, u).
\end{equation*}

The following result deals with the possibility of {\em reabsorbing} $\mu^-$ in the signed measure $\mu$.

\begin{prop}[{\cite[Proposition 2.3]{DipierroPereraSportelliValdinoci2024}}]
    \label{prop:DpPSV_prop_2.3}
    If \eqref{eq:hyp_mu_a}-\eqref{eq:hyp_mu_c} hold, then there exists a constant $c_0 = c_0(N, \Omega)>0$ such that 
    \begin{equation*}
        \int_{[0, \bar s]} \gag^2 \ d\mu^-(s) \leq c_0 \gamma \int_{[\bar s, 1]} \gag^2  d \mu(s) \quad \forall u \in \X.
    \end{equation*}
\end{prop}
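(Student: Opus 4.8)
The plan is to deduce the inequality from two applications of Lemma~\ref{lem:DpPSV_lemma_2.1}: one lifting the ``small'' exponents $s\in[0,\bar s]$ up to $\bar s$, and one controlling $[u]_{\bar s}^2$ by the $\mu^+$-average of $s\mapsto\gag^2$ over $[\bar s,1]$; the factor $\gamma$ then comes for free from \eqref{eq:hyp_mu_c}. As a preliminary reduction I would note that, by \eqref{eq:hyp_mu_b}, $\mu$ and $\mu^+$ agree on $[\bar s,1]$, so $\int_{[\bar s,1]}\gag^2\,d\mu(s)=\int_{[\bar s,1]}\gag^2\,d\mu^+(s)$ and it is enough to bound $\int_{[0,\bar s]}\gag^2\,d\mu^-(s)$ by $c_0\gamma\int_{[\bar s,1]}\gag^2\,d\mu^+(s)$. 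I would also record that $[u]_{\bar s}<+\infty$ for every $u\in\X$: indeed $\int_{[\bar s,1]}\gag^2\,d\mu^+(s)\le\|u\|_{\X}^2<+\infty$ and $\mu^+([\bar s,1])>0$ by \eqref{eq:hyp_mu_a}, so $[u]_{s_0}<+\infty$ for some $s_0\in[\bar s,1]$, and then Lemma~\ref{lem:DpPSV_lemma_2.1} with $s_1=\bar s\le s_2=s_0$ gives $[u]_{\bar s}\le c\,[u]_{s_0}<+\infty$.

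The argument then proceeds in three short steps. First, for every $s\in[0,\bar s]$, Lemma~\ref{lem:DpPSV_lemma_2.1} with $s_1=s$ and $s_2=\bar s$ yields $\gag\le c\,[u]_{\bar s}$; squaring and integrating in $s$ against $\mu^-$ over $[0,\bar s]$ gives $\int_{[0,\bar s]}\gag^2\,d\mu^-(s)\le c^2\,[u]_{\bar s}^2\,\mu^-([0,\bar s])$. Second, for every $s\in[\bar s,1]$, Lemma~\ref{lem:DpPSV_lemma_2.1} with $s_1=\bar s$ and $s_2=s$ yields $[u]_{\bar s}\le c\,\gag$; squaring, integrating against $\mu^+$ over $[\bar s,1]$ (so that the constant left-hand side integrates to $[u]_{\bar s}^2\,\mu^+([\bar s,1])$), and dividing by $\mu^+([\bar s,1])\in(0,+\infty)$ gives $[u]_{\bar s}^2\le\frac{c^2}{\mu^+([\bar s,1])}\int_{[\bar s,1]}\gag^2\,d\mu^+(s)$. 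Third, I would plug the second estimate into the first and invoke \eqref{eq:hyp_mu_c} in the form $\mu^-([0,\bar s])\le\gamma\,\mu^+([\bar s,1])$: the two factors $\mu^+([\bar s,1])$ cancel and one is left with $\int_{[0,\bar s]}\gag^2\,d\mu^-(s)\le c^4\gamma\int_{[\bar s,1]}\gag^2\,d\mu^+(s)$, which is the claim with $c_0=c^4$. Since the constant $c$ of Lemma~\ref{lem:DpPSV_lemma_2.1} depends only on $N$ and $\Omega$, so does $c_0$.

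The only delicate point — and it is bookkeeping rather than a genuine obstacle — is to make sure that $s\mapsto\gag^2$ is measurable, so that all the integrals above are meaningful and the steps legitimate; this follows from Tonelli's theorem applied to the nonnegative jointly measurable integrand $\cns\,|u(x)-u(y)|^2\,|x-y|^{-N-2s}$, together with the boundedness of $s\mapsto\cns$ on $[0,1]$, the finiteness of $\mu^+$ and $\mu^-$, and the finiteness observations made above. Beyond this, there is no analytic difficulty: the entire content of the statement is precisely the uniform-in-$s$ comparison of Gagliardo seminorms supplied by Lemma~\ref{lem:DpPSV_lemma_2.1}, used twice — once to push exponents up and once to average.
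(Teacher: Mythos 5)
Your proof is correct and follows the same route as the cited source (the paper itself only quotes \cite[Proposition 2.3]{DipierroPereraSportelliValdinoci2024} without reproducing the argument): two applications of Lemma \ref{lem:DpPSV_lemma_2.1} around the pivot exponent $\bar s$, combined with \eqref{eq:hyp_mu_a}--\eqref{eq:hyp_mu_c}. The constant bookkeeping ($c_0=c^4$ depending only on $N$ and $\Omega$) and the measurability remark are both fine.
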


\begin{remark}
    \label{rem:equivalent_norms}
    If $\gamma$ is small, recall \eqref{eq:hyp_mu_c}, then the bilinear form
    \begin{equation}
        \label{eq:equivalent_inner_product}
        u, v \in \X \mapsto \langle u, v \rangle - (u, v) \in \mathbb R
    \end{equation}
    induces a norm in $\X$ which is equivalent to the one given in Lemma \ref{lem:DpPSV_lemma_2.2}. Indeed, in view of Proposition \ref{prop:DpPSV_prop_2.3}, we have
    \begin{equation*}
        \langle u, u \rangle \geq \langle u, u \rangle - (u, u) \geq (1 - c_0 \gamma) \langle u, u \rangle.
    \end{equation*}
    In particular, it follows that the bilinear form \eqref{eq:equivalent_inner_product} is coercive and, therefore, gives rise to an equivalent inner product in $\X$. We denote by $\| \cdot \|$ this equivalent norm in $\X$, i.e.,
    \begin{equation*}
        \|u\|^2 = \langle u, u \rangle - (u, u), \quad u \in \X.
    \end{equation*}
\end{remark}

In the next proposition, some crucial embedding properties are stated.

\begin{prop}[{\cite[Proposition 2.4]{DipierroPereraSportelliValdinoci2024}}]\label{prop:sharp}    Suppose that $\mu = \mu^+ - \mu^-$ satisfies \eqref{eq:hyp_mu_a}-\eqref{eq:hyp_mu_c}. Let $s_\sharp \in [\bar s, 1]$ be such that $\mu^+([s_\sharp, 1]) > 0$. Then, there exists a constant $\bar c = \bar c(N, \Omega, s_\sharp)>0$ such that
    \begin{equation*}
        [u]_{s_\sharp} \leq \bar c \left( \intzo [u]_s^2 \ d\mu^+(s) \right)^{1/2}.
    \end{equation*}
    In particular, $\X$ is continuously embedded in $L^p(\Omega)$ for every $p \in [1, \critexp]$ and compactly embedded in $L^p(\Omega)$ for every $p \in [1, \critexp)$.
\end{prop}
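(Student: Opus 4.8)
The plan is to collapse the measure‑weighted norm of $\X$ onto the single fractional order $s_\sharp$ and then invoke the classical Sobolev and Rellich--Kondrachov theorems for the space of $H^{s_\sharp}$‑functions supported in $\overline{\Omega}$. So the proof splits into an elementary estimate (reducing everything to order $s_\sharp$) followed by a citation of standard embedding facts.

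First I would take $s_\sharp \in [\bar s, 1]$ with $\mu^+([s_\sharp,1])>0$ as in the statement. For every $s \in [s_\sharp, 1]$, Lemma \ref{lem:DpPSV_lemma_2.1} applied with $s_1 = s_\sharp \le s_2 = s$ gives a constant $c = c(N,\Omega)$ with $[u]_{s_\sharp} \le c\,[u]_s$ for every admissible $u$. Squaring and integrating this over $[s_\sharp,1]$ against $\mu^+$ yields
\[
  \mu^+([s_\sharp,1])\,[u]_{s_\sharp}^2
  \;\le\; c^2\!\int_{[s_\sharp,1]}[u]_s^2\,d\mu^+(s)
  \;\le\; c^2\!\intzo [u]_s^2\,d\mu^+(s),
\]
which is exactly the claimed inequality with $\bar c \coloneqq c\,\mu^+([s_\sharp,1])^{-1/2}$; in particular $[u]_{s_\sharp} < +\infty$ for every $u \in \X$. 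Taking instead $s_1 = 0$, $s_2 = s_\sharp$ in Lemma \ref{lem:DpPSV_lemma_2.1} gives $\|u\|_{L^2(\mathbb R^N)} = [u]_0 \le c\,[u]_{s_\sharp}$, so, combining the two estimates, the zero‑extension map is bounded from $\X$ into the fractional Sobolev space $H^{s_\sharp}(\mathbb R^N)$ and lands in the closed subspace of functions vanishing almost everywhere outside $\Omega$.

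With this reduction the embeddings are immediate: the Sobolev inequality gives $H^{s_\sharp}(\mathbb R^N) \hookrightarrow L^{\critexp}(\mathbb R^N)$ (with $\critexp$ read as an arbitrary finite exponent when $2 s_\sharp \ge N$), whence $\X \hookrightarrow L^{\critexp}(\Omega)$ and therefore $\X \hookrightarrow L^p(\Omega)$ for all $p \in [1,\critexp]$, because $\Omega$ is bounded; and the fractional Rellich--Kondrachov theorem for functions supported in the bounded set $\Omega$ (see, e.g., \cite{DiNezzaPalatucciValdinoci2012, MolicaBisciRadulescuServadei2016}) yields the compactness of $\X \hookrightarrow L^p(\Omega)$ for every $p \in [1,\critexp)$.

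The hard part is essentially nonexistent here: the argument is bookkeeping of uniform constants, all of which come from Lemma \ref{lem:DpPSV_lemma_2.1}. The only points that require a little care are citing the fractional Sobolev embedding at the critical exponent $\critexp$ with the correct convention when $2 s_\sharp \ge N$, and making sure that $[\,\cdot\,]_{s_\sharp}$ is an equivalent norm, not merely a seminorm, on the space of $H^{s_\sharp}$‑functions vanishing outside $\Omega$ — which is the fractional Poincaré inequality on the bounded set $\Omega$, itself the case $s_1 = 0$ of Lemma \ref{lem:DpPSV_lemma_2.1}.
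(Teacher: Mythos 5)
Your argument is correct, and it is essentially the canonical one: the paper itself imports this statement from \cite{DipierroPereraSportelliValdinoci2024} without reproving it, and the intended proof is exactly your reduction — integrate the uniform bound $[u]_{s_\sharp}\le c\,[u]_s$ of Lemma \ref{lem:DpPSV_lemma_2.1} over $[s_\sharp,1]$ against $\mu^+$, divide by $\mu^+([s_\sharp,1])>0$, and then invoke the classical fractional Sobolev and Rellich--Kondrachov embeddings for $H^{s_\sharp}$-functions vanishing outside the bounded set $\Omega$. Your side remarks (the Poincar\'e-type bound $[u]_0\le c\,[u]_{s_\sharp}$ from the case $s_1=0$, and the convention for $\critexp$ when $2s_\sharp\ge N$) are the right points to flag and are handled correctly.
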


By a solution to \eqref{eq:main_problem}, we mean a weak solution, i.e., a function $u \in \X$ such that for all $v \in \X$ it holds
\begin{equation*}
    \langle u, v \rangle - (u, v) = \bar \lambda \int_\Omega u v  \ dx + \int_\Omega f(x, u) v \ dx,
\end{equation*}
that is, 
\begin{align}
    \intzo & \left(\cns \intrn \intrn \frac{(u(x) - u(y))(v(x) - v(y))}{|x - y|^{N + 2s}} \ dx \ dy \right) \ d\mu^+(s) \nonumber \\
    & - \int_{[0, \bar s]} \left(\cns \intrn \intrn \frac{(u(x) - u(y))(v(x) - v(y))}{|x - y|^{N + 2s}} \ dx \ dy \right) \ d\mu^-(s) \nonumber \\
    & = \bar \lambda \int_\Omega u v \ dx + \int_\Omega f(x, u) v \ dx. \nonumber 
\end{align}

\subsection{On the eigenvalues of the operator $\Amu$}
By a solution to \eqref{eq:eigenvalue_problem}, we mean a weak solution, that is, a function $u \in \X$ such that for all $v \in \X$ it holds
\begin{align}
    \langle u, v \rangle - (u, v) = \lambda \int_\Omega u v \ dx, \nonumber 
\end{align}
for some $\lambda \in \mathbb R$.

It has been shown that the operator $\Amu$ satisfies conditions that are adequate for a spectral theory akin to that of the (classical or fractional) Laplace operator (compare with \cite{BartoloCandelaSalvatore2012} and \cite{BartoloMolicaBisci2015}). Namely, the following holds:

\begin{theorem}[{\cite[Theorem 1.3]{DipierroLippiSportelliValdinoci2025spectral}}]
    \label{thm:spectral}
    Let $\mu$ satisfy \eqref{eq:hyp_mu_a}-\eqref{eq:hyp_mu_c}, and assume that $N > 2s_{\sharp}$. Let $\Omega \subset \mathbb R^N$ be an open bounded set with Lipschitz boundary. Then, there exists $\gamma_0 > 0$ depending only on $N$ and $\Omega$ such that if $\gamma \in [0, \gamma_0]$, then:
    \begin{enumerate}
        \item there exists a sequence $\{\lambda_k\}_{k \in \mathbb N}$ of eigenvalues for \eqref{eq:eigenvalue_problem}, with a corresponding sequence of eigenfunctions $\{e_k\}_{k \in \mathbb N}$, such that
        \begin{equation*}
            0 < \lambda_1 \leq \lambda_2 \leq \ldots \leq \lambda_k \leq \ldots \to + \infty;
        \end{equation*}
    
        \item for any $k \in \mathbb N$, it holds
        \begin{equation}
            \label{eq:Rayleigh_k}
            \lambda_{k + 1} = \min_{u \in \P_{k + 1} \setminus \{0\}} \frac{\langle u, u \rangle - (u, u)}{\|u\|_2^2},
        \end{equation}
        where
        \begin{equation*}
            \P_{k + 1} \coloneqq \{u \in X_0(\Omega) \ : \ \langle u, e_j \rangle - (u, e_j) = 0 \ \forall j = 1, \ldots, k\};
        \end{equation*}

        \item the eigenfunction $e_k$ attains the minimum in \eqref{eq:Rayleigh_k};

        \item the sequence of eigenfunctions $\{e_k\}_{k \in \mathbb N}$ constitutes an orthonormal basis of $L^2(\Omega)$ and an orthogonal basis of $X_0(\Omega)$;

        \item each eigenvalue $\lambda_k$ has finite multiplicity.
    \end{enumerate}
\end{theorem}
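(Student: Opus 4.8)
\textit{Proof proposal.} The plan is to recast the eigenvalue problem \eqref{eq:eigenvalue_problem} as the spectral problem for a compact, self-adjoint, positive-definite operator on $L^2(\Omega)$, and then to invoke the classical Hilbert--Schmidt spectral theorem together with the Courant--Fischer min--max principle. First I would choose $\gamma_0 > 0$ so small that, for $\gamma \in [0, \gamma_0]$, the constant $c_0 = c_0(N,\Omega)$ from Proposition \ref{prop:DpPSV_prop_2.3} satisfies $c_0\gamma < 1$ (and small enough that Proposition \ref{prop:sharp} also applies). By Remark \ref{rem:equivalent_norms}, the bilinear form $a(u,v) \coloneqq \langle u, v\rangle - (u, v)$ is then a scalar product on $\X$ whose induced norm $\|\cdot\|$ is equivalent to $\|\cdot\|_{\X}$, so $(\X, a)$ is a Hilbert space. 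Since $N > 2s_\sharp$ we have $2 < \critexp$, and Proposition \ref{prop:sharp} gives that the embedding $\iota\colon \X \hookrightarrow L^2(\Omega)$ is continuous and compact. For $g \in L^2(\Omega)$ the functional $v \mapsto \int_\Omega gv\,dx$ is bounded and linear on $(\X, a)$, so by the Riesz representation theorem there is a unique $Tg \in \X$ with $a(Tg, v) = \int_\Omega gv\,dx$ for every $v \in \X$, and $T\colon L^2(\Omega)\to \X$ is bounded; set $K \coloneqq \iota\circ T\colon L^2(\Omega)\to L^2(\Omega)$.

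Then I would check the structural properties of $K$: it is compact (a bounded operator composed with a compact one), self-adjoint since $\int_\Omega (Kg)h\,dx = a(Tg, Th) = a(Th, Tg) = \int_\Omega(Kh)g\,dx$, and positive with trivial kernel, since $\int_\Omega (Kg)g\,dx = \|Tg\|^2 \ge 0$ and $Tg = 0$ forces $\int_\Omega gv\,dx = 0$ for all $v \in \X$, hence $g = 0$ by density of $\X$ in $L^2(\Omega)$. The spectral theorem then yields a nonincreasing null sequence $\mu_1 \ge \mu_2 \ge \cdots > 0$ of eigenvalues, each of finite multiplicity, with an associated $L^2$-orthonormal system $\{e_k\}$ of eigenfunctions which is complete in $L^2(\Omega)$ because $\ker K = \{0\}$. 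Unravelling the definition of $K$, the relation $K e_k = \mu_k e_k$ is equivalent to $a(e_k, v) = \lambda_k\int_\Omega e_k v\,dx$ for all $v \in \X$, with $\lambda_k \coloneqq 1/\mu_k$; hence $(\lambda_k, e_k)$ solves \eqref{eq:eigenvalue_problem} and $0 < \lambda_1 \le \lambda_2 \le \cdots \to +\infty$, which gives (1), and finite multiplicity of each $\mu_k$ gives (5).

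Finally I would extract the variational structure. Testing the eigenfunction identity for $e_i$ against $v = e_j$ gives $a(e_i, e_j) = \lambda_j\int_\Omega e_i e_j\,dx = \lambda_j\delta_{ij}$, so $\{e_k\}$ is $a$-orthogonal; and if $u \in \X$ satisfies $a(u, e_k) = 0$ for every $k$, then $\int_\Omega u e_k\,dx = 0$ for every $k$ (because $\lambda_k \ne 0$), so $u = 0$ in $L^2(\Omega)$ and therefore $u \equiv 0$. Hence $\{e_k\}$ is an $a$-orthogonal basis of $\X$, which is (4). For (2)--(3), given $u \in \P_{k+1}\setminus\{0\}$ I would expand $u = \sum_{j \ge k+1} c_j e_j$, the series converging in $\X$ by (4) and having vanishing first $k$ coefficients because $a(u, e_j) = 0$ for $j \le k$; then $a(u,u) = \sum_{j \ge k+1}\lambda_j c_j^2 \ge \lambda_{k+1}\sum_{j\ge k+1}c_j^2 = \lambda_{k+1}\|u\|_2^2$, with equality when $u = e_{k+1}$, which establishes \eqref{eq:Rayleigh_k} and shows $e_{k+1}$ realizes the minimum.

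The argument is the textbook one once the two structural inputs are in place, and accordingly the delicate points are precisely where the hypotheses are consumed: the smallness of $\gamma$ is needed only through Proposition \ref{prop:DpPSV_prop_2.3}, to make $a$ coercive so that $(\X,a)$ is a Hilbert space and Riesz representation applies, and $N > 2s_\sharp$ is needed only through Proposition \ref{prop:sharp}, to make $\iota$ compact into $L^2(\Omega)$. The one genuinely nonstandard feature is that the form $(\cdot,\cdot)$ built from $\mu^-$ is merely a nonnegative bilinear form and not an inner product, so one must argue throughout with the combined form $a$; in particular the completeness of $\{e_k\}$ in $\X$ cannot be read off a density statement and must be deduced from $a$-orthogonality via $a(u,e_k) = \lambda_k\int_\Omega u e_k\,dx$ and $\lambda_k \ne 0$.
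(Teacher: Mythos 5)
Your argument is sound, but note that the paper does not actually prove Theorem \ref{thm:spectral}: it imports it wholesale from \cite[Theorem 1.3]{DipierroLippiSportelliValdinoci2025spectral}, so there is no internal proof to match against. What you have written is the classical compact-resolvent route: make the form $a(u,v)=\langle u,v\rangle-(u,v)$ coercive via Proposition \ref{prop:DpPSV_prop_2.3} (this is exactly where $\gamma_0=\gamma_0(N,\Omega)$ comes from, consistent with Remark \ref{rem:equivalent_norms}), use Proposition \ref{prop:sharp} with $N>2s_\sharp$ for compactness of $\X\hookrightarrow L^2(\Omega)$, build the inverse $K$ by Riesz representation, and apply the Hilbert--Schmidt theorem; all five assertions then follow correctly as you derive them. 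The cited source (in the tradition of the fractional eigenvalue papers this generalizes) instead obtains \eqref{eq:Rayleigh_k} by direct iterative minimization of the Rayleigh quotient on the constrained sets $\P_{k+1}$, which produces items (2)--(3) as the primary objects and (1), (4), (5) as consequences; your route gets the spectral decomposition first and reads off the min-max afterwards. The two are equivalent in this setting, and your version has the advantage of making completeness of $\{e_k\}$ in $L^2(\Omega)$ immediate from $\ker K=\{0\}$. Two small points you should make explicit: the injectivity of $K$ uses that $\X$ is dense in $L^2(\Omega)$, which holds because $C^\infty_c(\Omega)\subset\X$ (finite total mass of $\mu^+$ plus Lemma \ref{lem:DpPSV_lemma_2.1} bound all the seminorms $[u]_s$ by $[u]_1$); and in the expansion step one should observe that the $L^2$-coefficients $\int_\Omega u e_j\,dx$ and the $a$-coefficients $a(u,e_j)/\lambda_j$ coincide, which is what lets you write $a(u,u)=\sum_j\lambda_j c_j^2$ with $\sum_j c_j^2=\|u\|_2^2$. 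Neither is a gap, just a line each.
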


\begin{remark}
    \label{rem:on_H_k}
    Let
    \begin{equation*}
        \H_k = \spanned\{e_1, \ldots, e_k\}
    \end{equation*}
    be the subspace of $X_0(\Omega)$ spanned by the first $k$ eigenfunctions. It is clear that
    \begin{equation*}
        X_0(\Omega) = \H_k \oplus \P_{k + 1}.
    \end{equation*}
    Moreover, 
    \begin{equation*}
        \left\{\frac{1}{\sqrt{\lambda_k}} e_k\right\}_{k \in \mathbb N}
    \end{equation*}
    is an orthonormal basis for $X_0(\Omega)$. Therefore, for all $u \in \H_k$, it holds that
    \begin{align}
        \|u\|^2 
        & = \sum_{j = 1}^k \frac{1}{\lambda_j} (\langle u, e_j \rangle - (u, e_j))^2 \nonumber \\
        &  \leq \lambda_k \sum_{j = 1}^k \left(\int_\Omega u e_j \ dx \right)^2 \nonumber \\
        & = \lambda_k \|u\|_2^2. \nonumber
    \end{align}
\end{remark}

\subsection{An abstract critical point theorem}
Here, we outline the main classical tools for dealing with asymptotically linear problems. Namely, we recall an abstract critical point theorem, proved in \cite{BartoloBenciFortunato1983}, based on pseudo-index theories, introduced in \cite{Benci1982}, to study the multiplicity of critical points of strongly indefinite functionals in the presence of symmetries.

Let $E$ be a real Banach space with norm $\|\cdot\|_E$, $E'$ its dual space and $I \in C^1(E, \mathbb R)$.

\begin{definition}[Palais-Smale condition]
    \label{def:PS}
    The functional $I$ satisfies the Palais-Smale condition, (PS) in short, at the level $c \in \mathbb R$, if any sequence $\{u_n\}_{n \in \mathbb N}$ in $E$ such that
    \begin{align}
        & I(u_n) \to c \quad \text{ as } n \to +\infty \nonumber \\
        & I'(u_n) \to 0 \text{ in } E' \quad \text{ as } n \to +\infty \nonumber
    \end{align}
    converges in $E$, up to subsequences. Such a sequence $\{u_n\}_{n \in \mathbb N}$ is said to be a Palais-Smale sequence. In general, if $- \infty \leq a < b \leq + \infty$, $I$ satisfies (PS) in $(a, b)$ if it satisfies (PS) for every $c \in (a, b)$.
\end{definition}

The following abstract critical point theorem, whose proof  is based on the pseudo-index related to the genus (see \cite{Benci1982} for more details), is key to our proof of 
Theorem \ref{thm:main}.		
\begin{theorem}[\cite{BartoloBenciFortunato1983}, Theorem 2.9]\label{thm:pseudo_index}
Let $I\in C^1(E,\mathbb R)$ and assume that:
\begin{enumerate}
\item $I$ is even;
\item $I$ satisfies (PS) in $\mathbb R$;
\item there exist two closed subspaces $V,W\subset X$ such that  $\dim V<+\infty, {\rm codim}\, W<+\infty$ and two constants $c_0,c_\infty$, such that $c_\infty>c_0$, verifying the following assumptions:
\begin{itemize}
\item $I(u)\geq c_0 $ on $S_\rho\cap W$ (resp. on $S_\rho\cap V$), 	where $S_\rho=\{u\in E: \|u\|_X=\rho\}$;
\item $I(u)\leq c_\infty $ on $V$  (resp. on $W$).
\end{itemize}
\end{enumerate}
If, moreover, $\dim V >  {\rm codim}\, W$, then $I$ has at least $\dim V -  {\rm codim}\, W$ distinct pairs of critical points whose corresponding critical values belong to $[c_0,c_\infty]$.
\end{theorem}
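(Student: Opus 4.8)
The plan is to prove this statement by the pseudo-index method of Benci, realised through the Krasnoselskii $\mathbb Z_2$-genus, treating the first alternative (i.e.\ $I \ge c_0$ on $S_\rho \cap W$ and $I \le c_\infty$ on $V$; the other is obtained by interchanging the roles of $V$ and $W$ and defining the pseudo-index relative to $S_\rho \cap V$). First I would recall the genus $\gamma$ on the family $\Sigma$ of closed symmetric subsets of $E \setminus \{0\}$, together with its standard properties: monotonicity, subadditivity $\gamma(A \cup B) \le \gamma(A) + \gamma(B)$, invariance and monotonicity under odd continuous maps, the dimension property $\gamma(\partial U \cap V) = \dim V$ for the boundary of a bounded symmetric neighbourhood $U$ of $0$ in a finite-dimensional $V$, the neighbourhood property for compact sets, and the intersection inequality $\gamma(A \cap Y) \ge \gamma(A) - \operatorname{codim} Y$ for a closed finite-codimensional subspace $Y$ (itself a consequence of subadditivity, since $A \cap Y^\perp \subset Y^\perp \setminus \{0\}$ has genus at most $\dim Y^\perp$).

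The second ingredient is an equivariant deformation lemma. Using that $I$ is even, of class $C^1$, and satisfies (PS) on all of $\mathbb R$, I would construct a normalized odd (locally Lipschitz) pseudo-gradient field for $I$ off its critical set and integrate it to obtain, for each non-critical level $c$ and each small $\varepsilon$, an odd homeomorphism $\eta = \eta(1,\cdot)$ of $E$ that pushes $\{I \le c+\varepsilon\}$ into $\{I \le c-\varepsilon\}$; here (PS) guarantees that $I$ has no critical points in a strip around $c$ and that the flow does not stall there, while normalizing the field makes $\sup_u \|\eta(1,u)-u\| < +\infty$. Let $\mathcal H$ be the group of odd homeomorphisms of $E$ that differ from the identity by a bounded map (it contains all such deformations), and define the pseudo-index of $A \in \Sigma$ relative to $S_\rho \cap W$ by
$$ i^*(A) = \min_{h \in \mathcal H} \gamma\bigl(h(A) \cap S_\rho \cap W\bigr). $$
Since $\mathcal H$ is a group, $i^*$ is invariant under every $h \in \mathcal H$, and from the genus properties it inherits monotonicity and the bound $i^*(A \cup B) \le i^*(A) + \gamma(B)$; these are exactly the features that make the minimax scheme below work.

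Next I would establish the two quantitative estimates encoding the geometric hypotheses. The easy one is the lower control: if $i^*(A) \ge 1$ then, taking $h = \mathrm{id}$, $A \cap S_\rho \cap W \ne \emptyset$, whence $\sup_A I \ge \inf_{S_\rho \cap W} I \ge c_0$. The decisive one is the linking estimate $i^*(V) \ge \dim V - \operatorname{codim} W$, i.e.\ $\gamma\bigl(h(V) \cap S_\rho \cap W\bigr) \ge \dim V - \operatorname{codim} W$ for every $h \in \mathcal H$. Since $h$ is odd and $\|h(v)\| \ge \|v\| - \mathrm{const}$, the set $\{v \in V : \|h(v)\| \le \rho\}$ is a bounded symmetric neighbourhood of $0$ in $V$, so its boundary $\Gamma = \{v \in V : \|h(v)\| = \rho\}$ has genus $\dim V$ by the dimension property; by odd-homeomorphism invariance $\gamma(h(\Gamma)) = \dim V$, and since $h(\Gamma) = h(V) \cap S_\rho$, the intersection inequality with the codimension-$\operatorname{codim} W$ subspace $W$ yields the claim. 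This step is the main obstacle: it is where the finiteness of $\dim V$ and $\operatorname{codim} W$, the strict inequality $\dim V > \operatorname{codim} W$, and the oddness of the admissible maps all enter at once, and it is precisely the reason the pseudo-index is needed rather than a naive linking over a fixed large sphere in $V$, which the deformations need not fix.

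Finally I would run the minimax. For $j = 1, \dots, \dim V - \operatorname{codim} W$ set
$$ c_j = \inf_{\substack{A \in \Sigma \\ i^*(A) \ge j}} \ \sup_{u \in A} I(u). $$
Monotonicity of $i^*$ gives $c_1 \le \dots \le c_{\dim V - \operatorname{codim} W}$; the first estimate gives $c_1 \ge c_0$, while $i^*(V) \ge \dim V - \operatorname{codim} W$ together with $\sup_V I \le c_\infty$ gives $c_{\dim V - \operatorname{codim} W} \le c_\infty$, so all the $c_j$ are finite and lie in $[c_0,c_\infty]$. A standard argument then shows each $c_j$ is a critical value: otherwise the equivariant deformation lemma supplies $\eta \in \mathcal H$ lowering $\sup_A I$ strictly below $c_j$ on an admissible competitor while preserving $i^*(A) \ge j$ by invariance, contradicting the definition of $c_j$. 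Applying the same deformation argument at coinciding levels shows that if $c_j = \dots = c_{j+p} = c$ then the compact symmetric critical set $K_c = \{u \in E : I(u) = c,\ I'(u) = 0\}$ satisfies $\gamma(K_c) \ge p+1$; a symmetric set of genus at least $p+1$ carries at least $p+1$ pairs $\pm u$, so, summing the multiplicities over the distinct values among $c_1, \dots, c_{\dim V - \operatorname{codim} W}$, one obtains at least $\dim V - \operatorname{codim} W$ distinct pairs of critical points of $I$, all with critical values in $[c_0,c_\infty]$. The remaining alternative is handled identically after interchanging $V$ and $W$.
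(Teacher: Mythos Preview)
The paper does not prove this theorem: it is quoted verbatim as \cite[Theorem~2.9]{BartoloBenciFortunato1983}, with only the remark that its proof ``is based on the pseudo-index related to the genus (see \cite{Benci1982} for more details).'' So there is no in-paper proof to compare against.

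Your proposal reconstructs precisely that classical argument: Krasnoselskii genus, Benci's pseudo-index relative to $S_\rho\cap W$ over the group of odd homeomorphisms bounded away from the identity, the linking estimate $i^*(V)\ge\dim V-\operatorname{codim}W$ via the dimension and intersection properties of the genus, and the standard minimax/multiplicity scheme using the equivariant deformation lemma. This is exactly the route the cited references take, and the sketch is correct in its essentials. One small point worth tightening if you write it out in full: the deformation you use must actually belong to $\mathcal H$, i.e.\ be an odd homeomorphism with $\sup_u\|\eta(1,u)-u\|<+\infty$; normalizing the pseudo-gradient field and cutting it off outside a strip around the level $c$ (so that $\eta(1,\cdot)=\mathrm{id}$ far from that strip) is the clean way to guarantee both properties simultaneously.
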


\section{Existence and multiplicity results}
\label{sec:existence}

By \eqref{eq:hyp_f_1} and \eqref{eq:hyp_f_2}, we have that for every $\varepsilon > 0$ there exists $a_\varepsilon > 0$ such that 
\begin{equation}
    \label{eq:B_MB_3.1}
    |f(x, t)| \leq \varepsilon |t| + a_\varepsilon, \quad \forall t \in \mathbb R \ \text{ almost everywhere in } \Omega.
\end{equation}

The weak solutions of \eqref{eq:main_problem} can be found as critical points of the functional
\begin{equation*}
    J(u) = \frac{1}{2} \|u\|^2 - \frac{\bar \lambda}{2} \int_\Omega u^2 \ dx - \int_\Omega F(x, u) \ dx, \quad u \in \X,
\end{equation*} 
where $\|\cdot\|$ has been defined in Remark \ref{rem:equivalent_norms} and, as usual, $ F(x, t) = \int_0^{t} f(x, s) \ ds.$

\begin{prop}
    \label{prop:Palais_Smale}
    Assume that \eqref{eq:hyp_f_1}-\eqref{eq:hyp_f_2} hold. If $\bar \lambda \not \in \sigma(\Amu)$, then the functional $J$ satisfies (PS) in $\mathbb R$.
\end{prop}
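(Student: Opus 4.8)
The plan is to follow the classical scheme for asymptotically linear functionals: given a (PS) sequence $\{u_n\}$, first show it is bounded in $\X$, then extract a convergent subsequence using the compact embeddings of Proposition \ref{prop:sharp}. The key structural feature exploited throughout is that, since $\bar\lambda\notin\sigma(\Amu)$, the bilinear form $\langle u,v\rangle-(u,v)-\bar\lambda\int_\Omega uv\,dx$ is nondegenerate, which will give the boundedness step its leverage.

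For boundedness, suppose $J(u_n)\to c$ and $J'(u_n)\to 0$ in $\X'$, and assume by contradiction that $\|u_n\|\to+\infty$. Set $w_n = u_n/\|u_n\|$, so $\|w_n\|=1$; up to a subsequence, $w_n\rightharpoonup w$ in $\X$ and $w_n\to w$ strongly in $L^2(\Omega)$ (and in every $L^p$ with $p<\critexp$), by Proposition \ref{prop:sharp}. Testing $J'(u_n)$ against an arbitrary $v\in\X$ and dividing by $\|u_n\|$ yields
\begin{equation*}
    \langle w_n, v\rangle - (w_n, v) - \bar\lambda\int_\Omega w_n v\,dx - \frac{1}{\|u_n\|}\int_\Omega f(x,u_n)v\,dx \to 0.
\end{equation*}
By \eqref{eq:B_MB_3.1} we have $|f(x,u_n)|\le \varepsilon|u_n| + a_\varepsilon$ almost everywhere, so $\|f(\cdot,u_n)\|_2 \le \varepsilon\|u_n\|_2 + a_\varepsilon|\Omega|^{1/2}$; dividing by $\|u_n\|$ and using the continuous embedding $\X\hookrightarrow L^2(\Omega)$, the last term tends to $0$ (more carefully: it is bounded by $C\varepsilon + o(1)$ for every $\varepsilon$, hence tends to $0$). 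Passing to the limit and using $w_n\to w$ in $L^2$ gives
\begin{equation*}
    \langle w, v\rangle - (w, v) = \bar\lambda\int_\Omega w v\,dx \quad\text{for all } v\in\X,
\end{equation*}
so $w$ is a weak solution of the eigenvalue-type equation with parameter $\bar\lambda$. Since $\bar\lambda\notin\sigma(\Amu)$, this forces $w=0$. But then taking $v = w_n$ in the displayed relation and using that $\langle w_n,w_n\rangle-(w_n,w_n)=\|w_n\|^2=1$ (by the definition of $\|\cdot\|$ in Remark \ref{rem:equivalent_norms}) gives $1 = \bar\lambda\int_\Omega w_n^2\,dx + o(1) \to \bar\lambda\cdot 0 = 0$, a contradiction. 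Hence $\{u_n\}$ is bounded in $\X$.

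For convergence, since $\X$ is a Hilbert space (Lemma \ref{lem:DpPSV_lemma_2.2}) and $\{u_n\}$ is bounded, up to a subsequence $u_n\rightharpoonup u$ in $\X$ and $u_n\to u$ strongly in $L^2(\Omega)$ by the compact embedding in Proposition \ref{prop:sharp}. Testing $J'(u_n)-J'(u)$ against $u_n-u$ and rearranging, one isolates $\|u_n-u\|^2 = \langle u_n-u, u_n-u\rangle - (u_n-u,u_n-u)$ on one side; the remaining terms are $\langle J'(u_n)-J'(u), u_n-u\rangle$ (which $\to 0$ since $J'(u_n)\to 0$ in $\X'$ and $u_n-u$ is bounded, while $J'(u)$ is a fixed functional applied to the weakly null sequence $u_n-u$), plus $\bar\lambda\int_\Omega(u_n-u)^2\,dx$ (which $\to 0$ by $L^2$ convergence), plus $\int_\Omega(f(x,u_n)-f(x,u))(u_n-u)\,dx$. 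For this last term, \eqref{eq:B_MB_3.1} and Hölder give $|\int_\Omega(f(x,u_n)-f(x,u))(u_n-u)\,dx| \le (\varepsilon\|u_n\|_2 + \varepsilon\|u\|_2 + 2a_\varepsilon|\Omega|^{1/2})\|u_n-u\|_2 \to 0$ as $n\to\infty$ (again using that $\varepsilon$ is arbitrary together with the $L^2$ boundedness). Therefore $\|u_n-u\|\to 0$, i.e., $u_n\to u$ in $\X$, which is the (PS) condition.

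The main obstacle is the boundedness step: the functional $J$ is strongly indefinite (the quadratic part is negative on the finite-dimensional span of eigenfunctions below $\bar\lambda$), so no coercivity or direct energy estimate is available, and one genuinely needs the asymptotic-linearity hypothesis \eqref{eq:hyp_f_2} together with the non-resonance condition $\bar\lambda\notin\sigma(\Amu)$, exactly as encoded in the contradiction argument above. Once boundedness is secured, the convergence step is routine and relies only on the compactness furnished by Proposition \ref{prop:sharp}.
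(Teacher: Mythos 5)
Your proof is correct and follows essentially the same scheme as the paper's: normalization $w_n=u_n/\|u_n\|$, the compact embedding of Proposition \ref{prop:sharp}, and passage to the limit eigenvalue equation, with the contradiction merely closed in the reverse order (you deduce $w=0$ from $\bar\lambda\notin\sigma(\Amu)$ and then contradict $\|w_n\|=1$ by testing with $v=w_n$, whereas the paper first proves $w_n\to w$ strongly in $\X$, so that $w\neq 0$, and then contradicts non-resonance). Both variants are valid, and your convergence step coincides with the paper's.
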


\begin{proof}
    Let $\{u_n\}_{n \in \mathbb N}$ be a (PS) sequence, i.e., it holds that
    \begin{align}
        &  \{J(u_n)\}_{n \in \mathbb N} \text{ is bounded}; \nonumber \\
        & \lim_{n \to +\infty} J'(u_n) = 0 \quad \text{ in the dual space } (\X)'. \label{eq:PS_b}
    \end{align}
    In particular, due to \eqref{eq:PS_b} it holds that
    \begin{equation}
        \label{eq:B_MB_3.4}
        \langle u_n, v \rangle - (u_n, v) - \bar \lambda \int_\Omega u_n v \ dx - \int_\Omega f(x, u_n) v \ dx = o(1) \quad \forall v \in \X,
    \end{equation}
    where $o(1)$ denotes an infinitesimal sequence.

    We claim that $\{u_n\}_{n \in \mathbb N}$ is bounded in $\X$. Indeed, arguing by contradiction, suppose that
    \begin{equation*}
        \|u_n\| \to + \infty \quad \text{ as } n \to +\infty.
    \end{equation*}
    For each $n$, we set
    \begin{equation*}
        w_n \coloneqq \frac{u_n}{\|u_n\|},
    \end{equation*}
    so that the sequence $\{w_n\}_{n \in \mathbb N}$ is bounded in $\X$. Since $\X$ is a Hilbert space, then there exists some $w \in \X$ such that
    \begin{equation*}
        w_n \rightharpoonup w \quad \text{ weakly in } \X
    \end{equation*}
    as $n \to +\infty$. Since the embedding of $\X$ into $L^2(\Omega)$ is compact (Proposition \ref{prop:sharp}), then also
    \begin{equation*}
        w_n \to w \quad \text{ strongly in } L^2(\Omega)
    \end{equation*}
    as $n \to +\infty$.

    Taking $w_n - w$ as test function in \eqref{eq:PS_b}, we obtain
    \begin{equation*}
        \langle u_n, w_n - w \rangle - (u_n, w_n - w) = \bar \lambda \int_\Omega u_n (w_n - w) \ dx + \int_\Omega f(x, u_n) (w_n - w) \ dx + o(1).
    \end{equation*}
    For each $n \in \mathbb N$, we divide by $\|u_n\|$ to obtain
    \begin{equation*}
        \langle w_n, w_n - w \rangle - (w_n, w_n - w) = \bar \lambda \int_\Omega w_n (w_n - w) \ dx + \int_\Omega \frac{f(x, u_n)}{\|u_n\|} (w_n - w) \ dx + o(1).
    \end{equation*}
    Now, since $w_n \to w$ strongly in $L^2(\Omega)$, we have
    \begin{equation*}
        \left|\int_\Omega w_n (w_n - w) \ dx \right| \leq \|w_n\|_2 \|w_n - w\|_2 = o(1),
    \end{equation*}
    because $\{w_n\}_{n \in \mathbb N}$, being bounded in $\X$, is bounded in $L^2(\Omega)$ too. On the other hand, by \eqref{eq:B_MB_3.1}, for any $\varepsilon > 0$ we have
    \begin{equation*}
        \left|\int_\Omega \frac{f(x, u_n)}{\|u_n\|}(w_n - w) \ dx \right| \leq \varepsilon \|w_n\|_2 \|w_n - w\|_2 + a_\varepsilon \frac{\|w_n - w\|_1}{\|u_n\|} = o(1).
    \end{equation*}
    Therefore, we conclude that
    \begin{equation*}
        \langle w_n, w_n - w \rangle - (w_n, w_n - w) = o(1).
    \end{equation*}
    But, since
    \begin{align}
        o(1) 
        & = \langle w_n, w_n - w \rangle - (w_n, w_n - w) \nonumber \\
        & = \langle w_n - w, w_n - w \rangle - (w_n - w, w_n - w) \nonumber \\
        & \quad + \langle w, w_n - w \rangle - (w, w_n - w) \nonumber \\
        & = \|w_n - w\|^2 + o(1), \nonumber
    \end{align}
    it follows that
    \begin{equation*}
        w_n \to w \quad \text{ strongly in } \X
    \end{equation*}
    and therefore $w \neq 0$.

    Dividing \eqref{eq:B_MB_3.4} by $\|u_n\|$, we obtain
    \begin{equation*}
        \langle w_n, v \rangle - (w_n, v) = \bar \lambda \int_\Omega w_n v \ dx - \int_\Omega \frac{f(x, u_n)}{\|u_n\|} v \ dx + o(1), \quad \forall v \in \X.
    \end{equation*}
    As before, since $\|u_n\| \to + \infty$ as $n \to +\infty$, then
    \begin{equation*}
        \int_\Omega \frac{f(x, u_n)}{\|u_n\|} v \ dx = o(1) ,\quad \forall v \in \X.
    \end{equation*}
    Then, passing to the limit, we obtain
    \begin{equation*}
        \langle w, v \rangle - (w, v) = \bar \lambda \int_\Omega w v \ dx ,\quad \forall v \in \X,
    \end{equation*}
    implying that $w$ is an eigenfunction with corresponding eigenvalue $\bar \lambda$. This contradicts the assumptions that $\bar \lambda \notin \sigma(\Amu)$. 
    
    The sequence $\{u_n\}_{n \in \mathbb N}$ is therefore bounded in $\X$. Since $\X$ is a Hilbert space, then there exists some $u \in \X$ such that
    \begin{align}
        & u_n \rightharpoonup u \quad \text{ weakly in } \X \nonumber \\
        & u_n \to u \quad \text{ strongly in } L^2(\Omega) \nonumber
    \end{align}
    as $n \to +\infty$. Hence
    \begin{equation*}
        J'(u_n)[u_n - u] \to 0 \quad \text{ as } n \to +\infty.
    \end{equation*}
    By the same reasoning as before, we obtain that
    \begin{equation*}
        u_n \to u \quad \text{ strongly in } \X,
    \end{equation*}
    which completes the proof.
\end{proof}

\subsection*{\textit{Proof of Proposition \ref{prop:existence_via_saddle_point}.}} 
    The proof follows by a standard application of variational methods (see, e.g., \cite{Rabinowitz1986, BadialeSerra2011}).

    We begin by noting that, by \eqref{eq:B_MB_3.1}, 
    \begin{equation*}
        |F(x, t)| \leq C_1(1 + t^2), \quad \forall t \in \mathbb R \text{ almost everywhere in } \Omega.
    \end{equation*}
    Now, by the variational characterization of the eigenvalues of $\Amu$ (see Theorem \ref{thm:spectral}), we have that given any $k \in \mathbb N$, it holds
    \begin{equation*}
        \|u\|_2^2 \leq \frac{1}{\lambda_k + 1} \|u\|^2, \quad \forall u \in \P_{k + 1}.
    \end{equation*}
    It follows that
    \begin{align}
        J(u)
        & = \frac{1}{2}\|u\|^2 - \frac{\bar \lambda}{2} \int_\Omega u^2 \ dx - \int_\Omega F(x, u) \ dx \nonumber \\
        & \geq \frac{1}{2} \left(1 - \frac{\bar \lambda}{\lambda_{k + 1}} - \frac{C_2}{\lambda_{k + 1}}\right) \|u\|^2 - C_3, \quad \forall u \in \P_{k + 1} \nonumber
    \end{align}
    where $C_2$ and $C_3$ do not depend on $k$. Therefore, provided that $k$ is large enough, we have that $J$ is bounded from below in $\P_{k + 1}$. In other words, there exists $\beta \in \mathbb R$ such that
    \begin{equation*}
        J|_{\P_{k + 1}} \geq \beta.
    \end{equation*}

    If $\bar \lambda < \lambda_1$, then there exists a minimizer for the functional $J$ in $\X$, since $J$ is weakly lower-semicontinuous and coercive.

    On the other hand, suppose that there for some $k \in \mathbb N$ it holds
    \begin{equation*}
        \lambda_k < \bar \lambda < \lambda_{k + 1}.
    \end{equation*}
    Let $\varepsilon > 0$ be such that $\lambda_k + \varepsilon < \bar \lambda$. By \eqref{eq:B_MB_3.1}, there exists a constant $C_\varepsilon > 0$ such that
    \begin{equation*}
        J(u) \leq \frac{1}{2} \|u\|^2 - \frac{\bar \lambda}{2} \|u\|_2^2 + \frac{\varepsilon}{2} \|u\|_2^2 + C_\varepsilon \|u\|_2, \quad \forall u \in \X,
    \end{equation*}
    and therefore, by Remark \ref{rem:on_H_k},
    \begin{equation*}
        J(u) \leq \frac{1}{2} (\lambda_k + \varepsilon - \bar \lambda)\|u\|_2^2 + C_\varepsilon \|u\|_2, \quad \forall u \in \H_k.
    \end{equation*}
    Therefore we can find $\rho > 0$ big enough such that
    \begin{equation*}
        J|_{S_\rho^k} \leq \alpha < \beta,
    \end{equation*}
    where $S_\rho^k$ is the sphere of radius $\rho$ in the finite dimensional space $\H_k$.

    Therefore, all the geometric assumptions of the Saddle Point Theorem (\cite[Theorem 4.6]{Rabinowitz1986}) are satisfied. Since the functional $J$ satisfies (PS), then $J$ has at least one critical point, i.e., \eqref{eq:main_problem} admits at least one nontrivial solution.
    \qed

\subsection*{\textit{Proof of Theorem \ref{thm:main}}}
    The proof is divided into three steps.
    \proofstep{1}{} We claim that there exist $\rho > 0$ and $c_0 > 0$ such that 
    \begin{equation}
        \label{eq:B_MB_3.19}
        J(u) \geq c_0, \quad \forall u \in S_\rho \cap \P_h,
    \end{equation}
    where
    \begin{equation*}
        S_\rho \coloneqq \{u \in \X \ : \ \|u\| = \rho\}.
    \end{equation*}

    To prove the claim, we begin by observing that assumption \eqref{eq:hyp_f_2} yields
    \begin{equation*}
        \lim_{|t| \to + \infty} \frac{F(x, t)}{t^2} = 0
    \end{equation*}
    uniformly almost everywhere in $\Omega$. Similarly, \eqref{eq:hyp_f_3} implies that
    \begin{equation*}
        \lim_{|t| \to 0} \frac{F(x, t)}{t^2} = \frac{\lambda_0}{2}
    \end{equation*}
    uniformly almost everywhere in $\Omega$. It follows that, for every $\varepsilon > 0$, there exist $r_\varepsilon \geq 1$ and $\delta_\varepsilon > 0$ such that
    \begin{align}
        & |F(x, t)| \leq \frac{\varepsilon}{2} t^2, \quad \text{ if } |t| > r_\varepsilon, \nonumber \\
        & \left|F(x, t) - \frac{\lambda_0}{2}t^2 \right| \leq \frac{\varepsilon}{2}t^2, \quad \text{ if } t < \delta_\varepsilon, \nonumber
    \end{align}
    almost everywhere in $\Omega$. Finally, by \eqref{eq:hyp_f_1}, we can choose $q \in [0, \critexp - 2)$ and obtain a constant $\kre > 0$ such that
    \begin{equation*}
        |F(x, t)| \leq \kre |t|^{q + 2}, \quad \text{ if } \delta_\varepsilon \leq |t| \leq r_\varepsilon
    \end{equation*}
    almost everywhere in $\Omega$.

    Combining these estimates, it follows that for every $\varepsilon > 0$ there exists $k_\varepsilon > 0$ such that
    \begin{equation*}
        F(x, t) \leq \frac{\lambda_0 + \varepsilon}{2} t^2 + k_\varepsilon |t|^{q + 2}, \quad \forall t \in \mathbb R,
    \end{equation*}
    almost everywhere in $\Omega$. Whence
    \begin{equation*}
        \int_\Omega F(x, u) \ dx \leq \frac{\lambda_0 + \varepsilon}{2} \|u\|_2^2 + k_\varepsilon \|u\|_{q + 2}^{q + 2}, \quad \forall u \in \X,
    \end{equation*}
    and therefore, since $\X$ is continuously embedded in $L^q(\Omega)$, we obtain
    \begin{equation*}
        J(u) \geq \frac{1}{2} \left(1 - \frac{\bar \lambda + \lambda_0 + \varepsilon}{\lambda_h} \right) \|u\|^2 - k_\varepsilon' \|u\|^{q + 2}, \quad \forall u \in \P_h.
    \end{equation*}
    Therefore, by \eqref{eq:hyp_Lambda}, for a suitable $\varepsilon > 0$ there exists $k_\varepsilon'' > 0$ such that 
    \begin{equation*}
        J(u) \geq k_\varepsilon'' \|u\|^2 - k_\varepsilon' \|u\|^{q + 2}, \quad \forall u \in \P_h.
    \end{equation*}
    By choosing $\rho$ small enough, we obtain \eqref{eq:B_MB_3.19}.

    \proofstep{2}{} We claim that there exists $c_\infty > c_0$ such that 
    \begin{equation*}
        J(u) \leq c_\infty, \quad \forall u \in \H_k.
    \end{equation*}

    To prove the claim, we let $k$ be as in \eqref{eq:hyp_Lambda} and $\varepsilon > 0$ be such that $\lambda_k + \varepsilon < \bar \lambda$, so that, as before,
    \begin{equation*}
        J(u) \leq \frac{1}{2} \|u\|^2 - \frac{\bar \lambda}{2} \|u\|_2^2 + \frac{\varepsilon}{2} \|u\|_2^2 + C_\varepsilon \|u\|_2, \quad \forall u \in \H_k.
    \end{equation*}
    It becomes clear that $J(u) \to - \infty$ as $\|u\| \to + \infty$ in $\H_k$, and the claim follows.

    \proofstep{3}{} 
    Now we can conclude the proof. Indeed, $J$ is even and  by Lemma \ref{prop:Palais_Smale} it satisfies (PS) in $\mathbb R$; moreover,	taking $W=\mathbb{P}_{h}$ and $V=\mathbb{H}_{k}$, Theorem 	\ref{thm:pseudo_index} applies and $J$ has at least $k-h+1$ distinct pairs of critical points.    
    \qed

We state now one of the several possible corollaries of our main result, as pointed out in Section \ref{sec:intro}.

\begin{corollary}\label{cor:serie}
{\em Let $\sum_{k = 0}^\infty c_k \in (0,+\infty)$,  with $c_k \in\mathbb R$ for all $k \in \mathbb N$, $1\geq s_0>s_1>s_2>\ldots\geq 0$ and assume that there exist $\gamma\geq 0$ and $\bar k\in\mathbb N$ such that $c_k>0$ for all $k\in \{1,\ldots,\bar k\}$ and 
$$
\sum_{k = \bar k + 1}^\infty c_k \leq \gamma \sum_{k = 0}^{{\bar k}} c_k .
$$
Then, under the assumptions of Theorem \ref{thm:main}, with $\{\lambda_j\}_{j\in\mathbb N}$ sequence of Dirichlet eigenvalues of the operator
$A_\mu=\sum_{k = 1}^\infty c_k (- \Delta)^{s_k}$, there exists $\gamma_0 > 0$, depending on $N$, $\Omega$, and $\bar \lambda$, such that if $\gamma \in [0, \gamma_0]$ and $\bar \lambda \notin \sigma(\Amu)$, then 
 \begin{equation*}
    \left\{
    \begin{array}{rcll}
        \sum_{k = 1}^\infty c_k (- \Delta)^{s_k} u- \bar \lambda u & = & f(x, u) & \quad \text{ in } \Omega \\
        u & = & 0 & \quad \text{ in } \mathbb R^N \setminus \Omega
    \end{array}
    \right.
\end{equation*} 
 admits at least $k - h + 1$ pairs of nontrivial weak solutions.}

\end{corollary}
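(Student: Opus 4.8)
The plan is to show that Corollary \ref{cor:serie} follows from Theorem \ref{thm:main} simply by verifying that the series operator $A_\mu = \sum_{k=1}^\infty c_k(-\Delta)^{s_k}$ fits the framework of that theorem, i.e., that it corresponds to a signed measure $\mu$ satisfying \eqref{eq:hyp_mu_a}--\eqref{eq:hyp_mu_c}. First I would identify the measure: set $\mu = \sum_{k=1}^\infty c_k \delta_{s_k}$, so that $\mu^+ = \sum_{c_k > 0} c_k \delta_{s_k}$ and $\mu^- = \sum_{c_k < 0} |c_k| \delta_{s_k}$; since $\sum_{k=1}^\infty c_k$ converges and the partial sums of the positive part are bounded (again using \eqref{eq:hyp_mu_c}-type control), both $\mu^+$ and $\mu^-$ are finite Borel measures on $[0,1]$, and $A_\mu u = \intzo \Deltas u\, d\mu(s) = \sum_{k=1}^\infty c_k (-\Delta)^{s_k} u$ as claimed.

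Next I would check the three structural hypotheses. Take $\bar s = s_{\bar k}$ (the smallest exponent among those indices $k \in \{1,\dots,\bar k\}$ on which $c_k > 0$). Since $c_k > 0$ for $k \in \{1,\dots,\bar k\}$, we have $\mu^+([\bar s, 1]) \geq \sum_{k=1}^{\bar k} c_k > 0$, giving \eqref{eq:hyp_mu_a}. For \eqref{eq:hyp_mu_b}, the hypothesis that $c_k > 0$ for $k \leq \bar k$ together with the ordering $s_0 > s_1 > \cdots$ means every exponent $s_k$ with $s_k \geq \bar s = s_{\bar k}$ has index $k \leq \bar k$, hence positive coefficient; so $\mu^-$ has no mass on $[\bar s, 1]$. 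For \eqref{eq:hyp_mu_c}, the assumed inequality $\sum_{k = \bar k+1}^\infty c_k \leq \gamma \sum_{k=0}^{\bar k} c_k$ controls (after taking absolute values on the negative terms, which are all among the indices $> \bar k$) the total mass $\mu^-([0,\bar s]) \leq \sum_{k > \bar k, c_k < 0} |c_k|$ by $\gamma \sum_{k=1}^{\bar k} c_k \leq \gamma\, \mu^+([\bar s,1])$, possibly after a harmless adjustment of the constant $\gamma$; one should be a little careful here to separate the $c_0$ term and to match the index conventions between the statement and the hypotheses.

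With $\mu$ verified to satisfy \eqref{eq:hyp_mu_a}--\eqref{eq:hyp_mu_c}, Theorem \ref{thm:spectral} provides the Dirichlet eigenvalue sequence $\{\lambda_j\}_{j \in \mathbb N}$ of $A_\mu$ for $\gamma$ below the threshold $\gamma_0$ of that theorem, and Theorem \ref{thm:main} applies verbatim: assuming $f$ is odd and satisfies \eqref{eq:hyp_f_1}--\eqref{eq:hyp_f_3}, and that there exist $h \leq k$ with $\lambda_0 + \bar\lambda < \lambda_h \leq \lambda_k < \bar\lambda$, and $\bar\lambda \notin \sigma(A_\mu)$, one obtains at least $k - h + 1$ pairs of nontrivial weak solutions. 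The constant $\gamma_0$ in the corollary is then the minimum of the $\gamma_0$ from Theorem \ref{thm:main} and whatever extra smallness was spent in the reabsorption step above; it depends only on $N$, $\Omega$, $s_\sharp$ (here determined by the $s_k$'s) and $\bar\lambda$.

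The only genuine content beyond bookkeeping is the bridge between the series condition and hypothesis \eqref{eq:hyp_mu_c}, and this is where I expect the main (mild) obstacle: one must make sure that all negatively-signed coefficients indeed sit at exponents below $\bar s$ — which is forced by the monotonicity of the $s_k$ and the positivity of the first $\bar k$ coefficients — and that the finiteness of $\mu^-$ and $\mu^+$ is not jeopardized by the series only being conditionally well-behaved; convergence of $\sum c_k$ together with the one-sided bound on the tail is exactly what guarantees absolute summability of the negative part. Everything else is a direct citation of Theorem \ref{thm:main}.
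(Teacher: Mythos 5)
Your proposal is correct and matches the paper's intent exactly: the paper gives no explicit proof of Corollary \ref{cor:serie}, presenting it as an immediate instance of Theorem \ref{thm:main} obtained by taking $\mu=\sum_k c_k\delta_{s_k}$ as in Examples \ref{ex:2}, and your verification of \eqref{eq:hyp_mu_a}--\eqref{eq:hyp_mu_b} via $\bar s=s_{\bar k}$ together with the reading of the tail condition as \eqref{eq:hyp_mu_c} is precisely that argument. You also rightly flag the only delicate point (the stated tail inequality bounds a signed sum rather than $\mu^-([0,\bar s])=\sum_{c_k<0}|c_k|$ directly, and the index conventions $k=0$ versus $k=1$ are slightly inconsistent), which is an imprecision in the corollary's statement rather than a gap in your argument.
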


\section*{Acknowledgements}
We are grateful to Caterina Sportelli for pointing out a flaw in an earlier version of the paper and for her very useful comments.

R. B.  is  partially supported by the European Union - Next Generation EU - PRIN 2022 PNRR {``P2022YFAJH Linear and Nonlinear PDE's: 
		New directions and Application''} and \\by  MUR  under the Programme ``Department of Excellence'' Legge 232/2016  (Grant CUP - D93C23000100001).		

D. G. A. and G. M. B. are partially supported by the European Union - NextGenerationEU within the framework of PNRR  Mission 4 - Component 2 - Investment 1.1 under the Italian Ministry of University and Research (MUR) program PRIN 2022 - Grant number 2022BCFHN2 - Advanced theoretical aspects in PDEs and their applications - CUP: H53D23001960006.

All authors are also partially supported by Gruppo Nazionale per l'Analisi Matematica, la Probabilità e le loro Applicazioni (GNAMPA) of the Istituto Nazionale di Alta Matematica (INdAM), Italy.

\bibliographystyle{acm}
\bibliography{ref_math}

\end{document}